\documentclass[12pt]{article}

\usepackage{amssymb,amsmath,amsthm}
\usepackage{epsfig}
\usepackage{breqn}
\usepackage{rotating}
\usepackage{amsfonts}
\usepackage{setspace}
\usepackage{fullpage}
\usepackage{enumitem}
\usepackage{bbold} 
\usepackage{comment}
\usepackage{pgf,tikz}
\usepackage{mathtools}  
\usepackage{hyperref}
\mathtoolsset{showonlyrefs} %to avoid too many numbers next to equations
\usepackage{authblk}

\bibliographystyle{plain}

%% THEOREM TYPE
\newtheorem{thm}{Theorem}%[section]
\newtheorem{claim}{Claim}
\newtheorem{lem}{Lemma}

\newtheorem{defi}{Definition}
\newtheorem{notation}{Notation}

\def\h{\mathcal H}
\def\P{\mathcal P}
\def\Q{\mathcal Q}
%%%%%%%%%%%%%%%%%%%%%%%%%%%%%%%%%%%%%%%%%%%

\newcommand{\abs}[1]{\left\lvert{#1}\right\rvert}

\setlength{\affilsep}{2em}

%```\author{Abhishek Methuku}
%\date{June 9, 2018}
%ct edit July 11

\begin{document}
\title{Avoiding long Berge cycles, the missing cases \\ $k=r+1$ and $k = r+2$}
% \author
% {
% Beka Ergemlidze \thanks{Central European University, Budapest. e-mail: beka.ergemlidze@gmail.com}
% \and
% Ervin Gy\H{o}ri\thanks{Alfr\'ed R\'enyi Institute of Mathematics, Hungarian Academy of Sciences.  e-mail: gyori.ervin@renyi.mta.hu}
% \and
% Abhishek Methuku\thanks{Central European University, Budapest. e-mail: abhishekmethuku@gmail.com}
% \and 
% Nika Salia\thanks{Central European University, Budapest. e-mail: Salia\char`_Nika@phd.ceu.edu}
% \and 
% Casey Tompkins\thanks{Alfr\'ed R\'enyi Institute of Mathematics, Hungarian Academy of Sciences. e-mail: ctompkins496@gmail.com}
% \and
% Oscar Zamora \thanks{Central European University, Budapest. e-mail: oscarz93@yahoo.es}
% }

\author[1]{Beka Ergemlidze}
\author[1,2]{Ervin Gy\H{o}ri} 
\author[1]{Abhishek Methuku}
\author[1]{Nika Salia}
\author[2]{Casey Tompkins}
\author[1,3]{Oscar Zamora} 
\affil[1]{Central European University, Budapest.\par
 \texttt{ \{abhishekmethuku,beka.ergemlidze\}@gmail.com, \newline Salia\char`_Nika@phd.ceu.edu, oscarz93@yahoo.es}     }
\affil[2]{Alfr\'ed R\'enyi Institute of Mathematics, Hungarian Academy of Sciences. \newline
 \texttt{gyori.ervin@renyi.mta.hu, ctompkins496@gmail.com}}
 \affil[3]{Universidad de Costa Rica, San Jos\'e.}

\maketitle

\begin{abstract}
The maximum size of an $r$-uniform hypergraph without a Berge cycle of length at least $k$ has been determined for all $k \ge r+3$ by F\"uredi, Kostochka and Luo and for $k<r$ (and $k=r$, asymptotically) by Kostochka and Luo.  In this paper, we settle the remaining cases: $k=r+1$ and $k=r+2$, proving a conjecture of F\"uredi, Kostochka and Luo.
\end{abstract}

%An $r$-uniform hypergraph is a family of $r$-element subsets of a finite set. For convenience we refer to an $r$-uniform hypergraph as an $r$-graph. Given a hypergraph $\h$, let $e(\h)$ denote the number of hyperedges of $\h$. The classical definition of a hypergraph cycle is due to Berge.

Given a hypergraph $\h$, let $V(\h)$ and $E(\h)$ denote the set of vertices and hyperedges of $\h$, respectively, and let $e(\h) = \abs{E(\h)}$.  A hypergraph is called $r$-uniform if all of its hyperedges have size $r$.  For convenience, we refer to an $r$-uniform hypergraph as an $r$-graph. Berge introduced the following definitions of a cycle and a path in a hypergraph.   

%The classical definitions of a hypergraph cycle is due to Berge.

\begin{defi}
 A \emph{Berge cycle} of length $l$ in a hypergraph is a set of $l$ distinct vertices $\{v_1, \ldots, v_l\}$ and $l$ distinct hyperedges $\{e_1, \ldots, e_l\}$ such that $\{v_i, v_{i+1}\} \subseteq e_i$ with indices taken modulo $l$. 
 %The vertices $v_1, \ldots, v_l$ are called the \emph{basic vertices} of the Berge cycle.

A \emph{Berge path} of length $l$ in a hypergraph is a set of $l + 1$ distinct vertices ${v_1, \dots, v_{l+1}}$ and $l$ distinct hyperedges ${e_1, \dots, e_l}$ such that $\{v_i,v_{i+1}\} \subseteq e_i$ for all $1 \le i \le l$. We say that such a Berge path is between $v_1$ and $v_{l+1}$. 
%The vertices $v_1, \ldots, v_{l+1}$ are called the \emph{basic vertices} of the Berge path.
\end{defi}

\begin{notation}
Let $\mathcal H$ be a hypergraph. Then its \emph{2-shadow}, $\partial_2 \h$, is the collection of pairs that lie in some hyperedge of $\h$. Given a set $S \subseteq V(\h)$, the subhypergraph of $\h$ induced by $S$ is denoted by $\h[S]$. 

We say $\h$ is \emph{connected} if $\partial_2(\h)$ is a connected graph. A hyperedge $h \in E(\h)$ is called a \emph{cut-hyperedge} of $\h$ if $\h \setminus \{h\} := (V(\h), E(\h) \setminus \{h\})$ is not connected. 

%CT: I think this last paragraph is confusing and not really needed.  I don't know what a block would be if not 2-connected.  I suggest we just delete this paragraph. 
%Abhi: Ok, removed the first sentence; second sentence justifies saying things like D = K_{r+1} and also saying \h[D] = K_{r+1}^r. In the former case, D represents the graph and in the latter, only the vertex set. Is there a better to convey this somehow?
%A $2$-connected block of $\partial_2(\h)$ is simply referred to as a \emph{block}. 
When we say $D$ is a block of $\partial_2(\h)$, we may either mean $D$ is the vertex-set of the block, or $D$ is the edge-set of the block depending on the context.
\end{notation}

\section{Background and our results}

Gy\H{o}ri, Katona and Lemons extended the well-known Erd\H{o}s-Gallai theorem to hypergraphs by showing the following.
\begin{thm}[Gy\H{o}ri, Katona, Lemons \cite{GyoKaLe}]
\label{GKL}
	Let $\h$ be an $r$-uniform hypergraph with no Berge path of length $k$.  If $k>r+1>3$, we have
	\begin{displaymath}
	e(\h) \le \frac{n}{k} \binom{k}{r}.
	\end{displaymath}
	If $r \ge k>2$, we have
	\begin{displaymath}
	e(\h) \le \frac{n(k-1)}{r+1}.
	\end{displaymath}
\end{thm}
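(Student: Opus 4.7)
The plan is induction on $n = |V(\mathcal{H})|$. First reduce to the connected case: if $\mathcal{H}$ has connected components $\mathcal{H}_1,\ldots,\mathcal{H}_t$, apply the desired bound componentwise and sum; both target inequalities are linear in $n$, so this step is free. Henceforth assume $\mathcal{H}$ is connected.

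The engine of the induction is to locate a vertex $v$ of low degree and delete it. Let $T$ denote the target ``average edge load per vertex'': $T = \binom{k}{r}/k$ in the first regime ($k > r+1$) and $T = (k-1)/(r+1)$ in the second ($r \ge k$). If some vertex $v$ has $d(v) \le T$, then $\mathcal{H}-v$ still contains no Berge path of length $k$ and has $n-1$ vertices, so induction gives $e(\mathcal{H}-v) \le (n-1)T$ and hence $e(\mathcal{H}) \le nT$, as required. The work is therefore to show that such a low-degree vertex always exists.

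Suppose for contradiction every vertex has degree strictly greater than $T$. Take a longest Berge path $P = v_1 e_1 v_2 \cdots e_\ell v_{\ell+1}$, where $\ell \le k-1$ by hypothesis. By maximality, every hyperedge $h \ne e_1$ containing $v_1$ must satisfy $h \subseteq V(P)$: otherwise, any $u \in h \setminus V(P)$ yields the longer path $u\, h\, v_1\, e_1\, v_2\, \cdots\, e_\ell\, v_{\ell+1}$. The analogous statement holds at $v_{\ell+1}$. The lower bound on $d(v_1)$ therefore pins many hyperedges inside $V(P)$ (a set of size at most $k$). Now execute a P\'osa-style rotation in the hypergraph setting: each hyperedge at $v_1$ meeting some interior vertex $v_i$ produces a new longest Berge path with endpoint $v_{i-1}$, growing a large pool of feasible ``first vertices.'' Since $\mathcal{H}$ is connected, if $V(\mathcal{H}) \supsetneq V(P)$ then some vertex of this pool has a hyperedge reaching outside $V(P)$, contradicting maximality. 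Thus $V(\mathcal{H}) = V(P)$, reducing the problem to bounding $e(\mathcal{H})$ on at most $k$ vertices with no Berge path of length $k$, where a direct count against $\binom{k}{r}$ closes the argument.

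The main obstacle is making the rotation step precise and quantitative in the hypergraph setting: each Berge edge has $r-2$ auxiliary vertices beyond its path-endpoints, and one must argue that rotations do not collide with earlier role-assignments of those auxiliary vertices. The two regimes also need separate bookkeeping, since when $r \ge k$ the hyperedges are ``thicker'' than the path and the structure of extremal configurations (and the threshold $T$) differs markedly from the $k > r+1$ case; in particular, in the dense regime one expects to rely more heavily on direct counting within $V(P)$ and less on rotation.
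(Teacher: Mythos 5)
This theorem is quoted from Gy\H{o}ri--Katona--Lemons and is not proved anywhere in the present paper, so there is no in-paper argument to compare against; I can only judge your sketch on its own terms. The general frame you propose (induction on $n$ by deleting a vertex of degree at most the target average $T$, with a longest-Berge-path argument to produce such a vertex) is a reasonable shape for the regime $k>r+1$, but as written the proposal has genuine gaps and does not constitute a proof.

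First, the regime $r\ge k$ breaks immediately: there $T=(k-1)/(r+1)<1$, so a vertex of degree at most $T$ is an isolated vertex, and no longest-path or rotation argument will manufacture one in a connected hypergraph with edges. Your induction engine simply does not apply in that case, and the remark that one ``expects to rely more heavily on direct counting'' is not a substitute for an argument. Second, even for $k>r+1$ the heart of the matter --- the P\'osa-style rotation --- is exactly the step you leave open. Two of its ingredients are already imprecise: the maximality claim must be stated for hyperedges $h\notin\{e_1,\dots,e_\ell\}$ (an $e_i$ with $i\ge 2$ may contain vertices outside $V(P)$, since the hyperedges of a Berge path are not contained in its vertex set, and this matters all the more when $r$ is close to $k$); and connectivity only guarantees that some vertex of $V(P)$ has a hyperedge leaving $V(P)$, whereas you need this for a vertex of the rotated \emph{endpoint pool}, which requires a quantitative lower bound on the size of that pool that you never establish. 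The collision problem you yourself flag --- auxiliary vertices of hyperedges being reassigned across rotations --- is precisely why the hypergraph Erd\H{o}s--Gallai theorem is nontrivial and why the boundary case $k=r+1$ needed a separate paper; naming the obstacle is not the same as overcoming it. As it stands, this is a plausible outline for one of the two regimes and no argument at all for the other.
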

For the case $k=r+1$, Gy\H{o}ri, Katona and Lemons conjectured that the upper bound should have the same form as the $k>r+1$ case. This was settled by Davoodi, Gy\H{o}ri, Methuku and Tompkins \cite{DavoodiGMT} who showed the following. 

\begin{thm}[Davoodi, Gy\H{o}ri, Methuku, Tompkins \cite{DavoodiGMT}]
\label{DavoodiGMTthm}
Fix $k = r+1>2$ and let $\h$ be an $r$-uniform hypergraph containing no Berge path of length $k$.  Then,
	\begin{displaymath}
	e(\h) \le \frac{n}{k} \binom{k}{r} = n.
	\end{displaymath}
\end{thm}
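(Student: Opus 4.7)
My plan is to proceed by induction on the number of vertices $n$, with the base cases $n \le r + 1$ following from $e(\h) \le \binom{n}{r} \le n$. For the inductive step, I assume $\h$ is a minimal counterexample on $n \ge r + 2$ vertices with $e(\h) \ge n + 1$. Applying the inductive hypothesis to $\h[V(\h) \setminus \{v\}]$ gives $\deg(v) \ge e(\h) - (n - 1) \ge 2$ for every vertex $v$, and I may assume $\h$ is connected (otherwise apply the argument componentwise).

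The core structural tool is the following observation about any longest Berge path $P = v_1 e_1 v_2 \cdots v_\ell e_\ell v_{\ell + 1}$ (necessarily of length $\ell \le r$): every hyperedge $e \ne e_1$ containing $v_1$ must satisfy $e \subseteq V(P)$, for otherwise a vertex $u \in e \setminus V(P)$ would yield a longer Berge path $u, e, v_1, e_1, \ldots, v_{\ell+1}$. Since $\deg(v_1) \ge 2$, at least one $r$-subset of $V(P)$ must be an edge of $\h$, which forces $|V(P)| \ge r$, so $\ell \in \{r - 1, r\}$; the same conclusion holds at the other endpoint $v_{\ell+1}$.

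In the case $\ell = r - 1$, the only $r$-subset of $V(P)$ is $V(P)$ itself, which must therefore be an edge $f \ne e_1$. Combining $|e_1| = r = |V(P)|$ with $e_1 \ne V(P)$ produces a vertex $u \in e_1 \setminus V(P)$, whence $u, e_1, v_2, e_2, \ldots, v_r, f, v_1$ is a Berge path of length $r$, contradicting the maximality $\ell = r - 1$ (the subcase where $f = e_i$ for some interior index is handled by a separate rerouting argument exploiting the fact that $e_i = V(P)$ acts as a universal edge through the path vertices). Thus $\ell = r$ and $|V(P)| = r + 1$.

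The main case $\ell = r$ requires a P\'osa-style rotation. Any extra edge $f$ at $v_1$ is an $r$-subset $V(P) \setminus \{v_j\}$ for some $j \in \{2, \ldots, r + 1\}$. When $e_1 \not\subseteq V(P)$, one attempts to produce a Berge path of length $r + 1$ directly by prepending $u \in e_1 \setminus V(P)$ and appending $v_1$ through $f$; this succeeds whenever $v_{r+1} \in f$ and $f$ does not coincide with any $e_i$. When this fails, rotating $P$ along $f$ yields a new longest path with a different endpoint, which inherits the structural restriction on its edges; iterating these rotations at both endpoints progressively forces most path edges to lie in $V(P)$, eventually making $\h[V(P)]$ contain more than $r + 1$ edges and contradicting the inductive hypothesis (valid since $|V(P)| = r + 1 < n$). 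The main technical obstacle is precisely this rotation/overlap analysis: handling all possible values of $j$ together with the collision configurations between $f$, $e_1$, $e_r$, and the middle path edges $e_i$, while coordinating rotations at both $v_1$ and $v_{r+1}$ to force the final contradiction.
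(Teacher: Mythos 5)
Your proposal takes a genuinely different route from the paper: the paper deduces this theorem in a few lines from Theorem \ref{r+1} (if $e(\h)\ge n$ there is a Berge cycle of length at least $r+1$, which must have length exactly $r+1$; all its hyperedges then lie inside its vertex set $S$, $S$ is an entire component of $\partial_2(\h)$, and one deletes $S$ and inducts), whereas you attempt a self-contained P\'osa-type rotation argument on longest Berge paths. That route is legitimate in principle (it is close in spirit to the original Davoodi--Gy\H{o}ri--Methuku--Tompkins proof), but as written it has gaps that are not merely technical.

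First, your structural observation excludes only $e_1$: a path hyperedge $e_i$ with $i\ge 2$ may contain $v_1$ together with a vertex outside $V(P)$, and it cannot be reused to prepend that vertex, so $\deg(v_1)\ge 2$ does \emph{not} force some $r$-subset of $V(P)$ to be a hyperedge --- the second hyperedge at $v_1$ could be one of the $e_i$ and need not lie in $V(P)$. Second, and more seriously, your terminal contradiction is unreachable: an $(r+1)$-set has exactly $r+1$ subsets of size $r$, so $\h[V(P)]$ can never contain ``more than $r+1$ edges,'' and the inductive bound $e(\h[V(P)])\le r+1$ is never violated. The contradiction a rotation argument must actually produce is different: once $\h[V(P)]$ is rich enough, $V(P)$ carries a spanning Berge cycle of length $r+1$ (or at least a Berge path of length $r$ between any two of its vertices, as in Lemma \ref{r_path_between_any_pair}), and then any hyperedge meeting both $V(P)$ and its complement --- which exists by connectivity since $n>r+1$ --- extends this to a Berge path of length $r+1$. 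Without that endgame, the iteration you describe has nothing to terminate at. Finally, the collision analysis ($f=e_i$, coordinating rotations at both endpoints, the interaction with reused hyperedges) is exactly where the content of such a proof lives, and you explicitly defer it. As it stands the proposal is an outline of a plausible but unexecuted strategy, aimed at a contradiction that cannot occur; I would recommend either completing the rotation analysis with the corrected endgame, or following the paper and deriving the statement from the Berge-cycle theorem.
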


The bounds in the above two theorems are sharp for each $k$ and $r$ for infinitely many $n$. Gy\H{o}ri, Methuku, Salia, Tompkins and Vizer \cite{GyoriMSTV} proved a significantly smaller upper bound on the maximum number of hyperedges in an $n$-vertex $r$-graph with no Berge path of length $k$ under the assumption that it is connected. Their bound is asymptotically exact when $r$ is fixed and $k$ and $n$ are sufficiently large. The notion of Berge cycles and Berge paths was generalized to arbitrary Berge graphs $F$ by Gerbner and Palmer in \cite{gp1}, and the ($3$-uniform) Tur\'an number of Berge-$K_{2,t}$ was determined asymptotically in \cite{GMM}. The general behaviour of the Tur\'an number of Berge-$F$, as the uniformity increases, was studied in \cite{GMTthreshold}. 

\vspace{2mm}

Recently, F\"uredi, Kostochka and Luo \cite{furedi2018avoiding} proved exact bounds similar to Theorem~\ref{GKL} for hypergraphs avoiding long Berge cycles.

\begin{thm}[F\"uredi, Kostochka, Luo \cite{furedi2018avoiding}]
\label{FKLuo}
Let $r \ge 3$ and $k \ge r+3$, and suppose $\h$ is an $n$-vertex $r$-graph with no Berge cycle of length $k$ or longer. Then $e(\h) \le \frac{n-1}{k-2}\binom{k-1}{r}$. Moreover, equality is achieved if and only if $\partial_2(\h)$ is connected and for every block $D$ of $\partial_2(\h)$, $D = K_{k-1}$ and $\h[D] = K_{k-1}^r$. 
\end{thm}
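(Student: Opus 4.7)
The plan is a two-step reduction: first cut the problem into 2-connected pieces via the block structure of $\partial_2(\mathcal{H})$, then prove a Kopylov-type edge bound for a single 2-connected piece.

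For the block reduction, the key observation is that since $r \ge 3$, the 2-shadow of a single hyperedge $h \in E(\mathcal{H})$ is a copy of $K_r$, which is 2-connected, so $\binom{h}{2}$ must lie entirely inside one block of $\partial_2(\mathcal{H})$. Hence the hyperedges of $\mathcal{H}$ partition naturally among the blocks $B_1, \ldots, B_t$ of $\partial_2(\mathcal{H})$, giving $e(\mathcal{H}) = \sum_i e(\mathcal{H}[V(B_i)])$. Each $\mathcal{H}[V(B_i)]$ inherits the condition of containing no Berge cycle of length at least $k$, and its 2-shadow is either 2-connected or a single edge. A standard block-tree count yields $\sum_i (|V(B_i)| - 1) \le n-1$, with equality iff $\partial_2(\mathcal{H})$ is connected. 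Hence, granting a bound $e(\mathcal{H}[V(B_i)]) \le \frac{|V(B_i)|-1}{k-2}\binom{k-1}{r}$ per block, summation immediately yields the global bound, and equality forces connectivity of $\partial_2(\mathcal{H})$ together with every block being extremal.

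It therefore remains to prove: if $\partial_2(\mathcal{H})$ is 2-connected (and $\mathcal{H}$ has no Berge cycle of length $\ge k$), then $e(\mathcal{H}) \le \frac{n-1}{k-2}\binom{k-1}{r}$, with equality iff $n = k-1$ and $\mathcal{H} = K_{k-1}^r$. For this I would mimic the classical Kopylov proof. Take a longest Berge path $P = v_1 e_1 v_2 \ldots v_\ell$ in $\mathcal{H}$ (so no vertex outside $P$ is joined in $\partial_2(\mathcal{H})$ to an endpoint of $P$ by an edge realized by an unused hyperedge), and apply Pósa-type rotations at the endpoints to build a large set $S$ of possible new endpoints. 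Using 2-connectivity of $\partial_2(\mathcal{H})$, two internally disjoint 2-shadow paths between the current endpoints (passing through $S$) produce a Berge cycle whose length is controlled from below by weights assigned in the Kopylov manner. When $n > k-1$, the weight inequality forces a Berge cycle of length $\ge k$, a contradiction; when $n = k-1$, one compares directly with $\binom{k-1}{r}$ and checks the equality case.

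The main obstacle is precisely the 2-connected Kopylov step. The combinatorial skeleton (longest path, rotations, weights) is the same as in the graph case, but the translation to Berge structures is delicate: the hyperedges "witnessing" a Berge path are not canonical, so extending or rotating a path requires selecting an unused hyperedge at each step, and the bookkeeping has to simultaneously track used hyperedges and shadow edges. Handling the boundary regime $k \ge r+3$ (which is exactly where the bound $\frac{n-1}{k-2}\binom{k-1}{r}$ dominates competing constructions) is what makes the hypothesis $k \ge r+3$ essential, and is precisely what breaks in the cases $k = r+1, r+2$ addressed in the present paper.
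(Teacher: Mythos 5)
This statement (Theorem~\ref{FKLuo}) is quoted from F\"uredi, Kostochka and Luo and is not proved in the present paper, so there is no in-paper proof to compare against; judging your proposal on its own terms, it has a genuine gap. The block reduction in your first paragraph is fine: since $r\ge 3$ the shadow of each hyperedge is a $2$-connected $K_r$, so it lies in a single block, the hyperedges distribute among blocks without double counting (two blocks share at most one vertex), and $\sum_i(|V(B_i)|-1)\le n-1$ with equality iff $\partial_2(\h)$ is connected. But this reduction only relocates the theorem; the entire content is the per-block bound $e(\h[V(B_i)])\le\frac{|V(B_i)|-1}{k-2}\binom{k-1}{r}$, and for that you offer only the name of a strategy (``mimic Kopylov, apply P\'osa rotations, the bookkeeping is delicate'') while explicitly conceding that this step is the main obstacle. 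A proposal that reduces a theorem to its hard core and then defers the core is not a proof.

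Moreover, the sketch as stated does not obviously close. Kopylov's graph argument bounds the number of \emph{edges} via degrees of rotation endpoints into the two end-segments of a longest path, which is where the quadratic count comes from; the target here is $\binom{k-1}{r}$, i.e.\ the number of $r$-subsets of a $(k-1)$-set, and no rotation/weight scheme you describe produces that quantity. In the Berge setting a single hyperedge realizes up to $\binom{r}{2}$ shadow edges, so counting shadow edges wildly overcounts hyperedges, and the honest difficulty is to bound the number of \emph{hyperedges} meeting a longest Berge path --- typically by classifying them according to how many path vertices they contain and injecting them into $r$-subsets of a $(k-1)$-set, or by building an auxiliary representative graph with controlled edge multiplicities. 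That counting idea is absent from your outline. It is also worth noting that the present paper's own method for the neighbouring cases $k=r+1,r+2$ is quite different from what you propose: rather than rotations and weights, the authors use Hall's theorem to fix an injection $v\mapsto f(v)\in E(\h)$ with $v\in f(v)$, take a longest path realized by such an injection, show the last $r+1$ vertices span a block inducing many hyperedges, and contract that block to induct. Your final remark that $k\ge r+3$ is ``precisely what breaks'' for $k=r+1,r+2$ is directionally right but unsupported as written --- you would need to exhibit where in your weight inequality the hypothesis $k\ge r+3$ is actually used.
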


Moreover, Kostochka and Luo \cite{KostochkaLuo} found exact bounds for $k \le r-1$ and asymptotic bounds for $k=r$. Let us remark that their asymptotic bound in the case $k=r$ also follows from Theorem \ref{r+1} stated below. (More recently, extending \cite{furedi2018avoiding}, F\"uredi, Kostochka, Luo \cite{furedi2018avoiding2} proved exact bounds and determined the extremal examples for all $n$ when $k \ge r+4$.)

\vspace{2mm}

The two cases $k = r+2$ and $k = r+1$ remained open. For the case $k = r+2$, F\"uredi, Kostochka and Luo conjectured \cite{furedi2018avoiding} that a similar statement as that of Theorem \ref{FKLuo} holds and mentioned the answer is unknown for the case $k = r+1$. In this paper, we prove this conjecture.

\begin{thm}
\label{r+2}
Let $r \ge 3$ and $n \ge 1$, and suppose $\h$ is an $n$-vertex $r$-graph with no Berge cycle of length $r+2$ or longer. Then $e(\h) \le \frac{r+1}{r}(n-1)$. Moreover, equality is achieved if and only if $\partial_2(\h)$ is connected and for every block $D$ of $\partial_2(\h)$, $D = K_{r+1}$ and $\h[D] = K_{r+1}^r$. 
\end{thm}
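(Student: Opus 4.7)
The plan is induction on $n$, reducing to the case where $\partial_2(\h)$ is a single block. Observe first that for any $h \in E(\h)$, the set $\binom{h}{2}$ is a clique in $\partial_2(\h)$ and so lies entirely inside a single block; hence the hyperedges partition as $E(\h) = \bigsqcup_D E(\h[D])$, with $D$ running over the blocks of $\partial_2(\h)$. If $\h$ is connected, $\sum_D (|V(D)|-1) = n-1$; if $\h$ has $c \ge 2$ components, this sum is $n-c < n-1$, and applying the conclusion componentwise yields a strict inequality (so no equality case arises in the disconnected setting). Each $\h[D]$ is an $r$-graph with no Berge cycle of length $\ge r+2$, since any such cycle in $\h[D]$ would lie in $\h$. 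Assuming the theorem for smaller $n$, the proof therefore reduces to the following \emph{block lemma}: if $\partial_2(\h)$ is 2-connected (or a trivial single-edge or single-vertex block), then $e(\h) \le \frac{r+1}{r}(n-1)$, with equality if and only if $n = r+1$ and $\h = K_{r+1}^r$.

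For the block lemma, the range $n \le r+1$ is immediate: when $n \le r$ any hyperedge has $r$ vertices so $e(\h) \le 1$, satisfying the bound strictly for $r \ge 3$; when $n = r+1$ we have $e(\h) \le \binom{r+1}{r} = r+1 = \frac{r+1}{r}\cdot r$, with equality precisely when $\h = K_{r+1}^r$. The whole theorem therefore comes down to the following core claim: if $\partial_2(\h)$ is 2-connected and $|V(\h)| \ge r+2$, then $\h$ contains a Berge cycle of length at least $r+2$.

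To establish the core claim I would take a longest Berge cycle $C = v_1 v_2 \cdots v_l$ in $\h$ with supporting distinct hyperedges $e_1, \ldots, e_l$, and assume for contradiction that $l \le r+1$. Since $l < n$, some vertex $u \notin V(C)$ exists, and 2-connectivity of $\partial_2(\h)$ furnishes two internally disjoint paths in $\partial_2(\h)$ from $u$ to $V(C)$ ending at distinct cycle vertices $v_a, v_b$. The plan is to reroute $C$ through these two paths to produce a Berge cycle strictly longer than $C$, contradicting the maximality of $l$. There is substantial room to do this because each $e_i$ has size $r$ while only two of its vertices are used as cycle vertices on $C$, so the $e_i$ contain a reservoir of ``spare'' vertices that can be pressed into service as new cycle vertices on the detour, or swapped with hyperedges needed by it. The equality characterization is then extracted by tracing when no such reroute exists, which forces $n = r+1$ and $\h = K_{r+1}^r$.

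\textbf{Main obstacle.} The principal difficulty is realizing the reroute in the Berge setting: all hyperedges on the new cycle must remain distinct, yet a single size-$r$ hyperedge can cover up to $r-1$ edges of $C$ simultaneously, so a naive greedy choice of hyperedges need not succeed. A Hall's-theorem matching argument, together with a detailed case split on how the $e_i$ overlap each other and on how the two paths from $u$ attach to $C$, is likely needed. Extra care will be required for small $r$ (especially $r = 3$, where $\partial_2(\h)$ need only have minimum degree $2$) and for cycle lengths $l$ close to $r+1$, where the ``spare-vertex'' reservoir in the $e_i$ is smallest. Recovering the exact equality case should emerge from pinpointing the unique obstructions to rerouting.
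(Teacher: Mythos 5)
Your reduction to blocks is sound: for $r\ge 3$ each hyperedge's shadow is a clique on at least three vertices, hence lies in a unique block, so $E(\h)$ partitions over blocks and $\sum_D(|D|-1)\le n-1$; the theorem would indeed follow from your ``block lemma.'' The fatal problem is the ``core claim'' to which you reduce that lemma: it is false. A $2$-connected shadow on $n\ge r+2$ vertices does \emph{not} force a Berge cycle of length at least $r+2$. Take $r=3$ and the ``book'' with hyperedges $\{1,2,i\}$ for $i=3,\dots,n$: its shadow is $2$-connected (deleting any one vertex leaves a connected graph), $n$ is arbitrary, yet each vertex $i\ge 3$ lies in only one hyperedge, so no Berge cycle can pass through any such vertex (it would need two distinct hyperedges containing $i$); the longest Berge cycle has length $2$. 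The analogous sunflower works for every $r$. So in the single-block case with $n\ge r+2$ you cannot derive a contradiction by exhibiting a long cycle; such hypergraphs genuinely exist, and you must instead bound $e(\h)$ directly, for which your proposal has no mechanism. This is exactly why the paper's induction first installs property (1) --- every $S\subseteq V(\h)$ meets at least $|S|$ hyperedges, else delete $S$ and induct (note this disposes of the book instantly via $S=V(\h)$) --- then uses Hall's theorem to get an injection $\phi:V(\h)\to E(\h)$ with $v\in\phi(v)$, takes a longest Berge path compatible with $\phi$, and runs a rotation argument on $\phi$ to pack $r$ distinct hyperedges into the last $r+1$ path vertices; only at that point is $2$-connectivity used (together with Lemma \ref{r_path_between_any_pair} and the no-cut-hyperedge reduction) to show that this $(r+1)$-set is itself a block, which is contracted to close the induction.

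Even where your strategy is not contradicted by the example above, it is not carried out: the ``reroute a longest Berge cycle through two attachment paths'' step is the entire difficulty of theorems in this area, and you explicitly leave it unresolved (``a Hall's-theorem matching argument \dots is likely needed''). The paper never reroutes cycles; it works with representative-labelled paths precisely so that distinctness of hyperedges is automatic under rotation. As written, your argument establishes only the base cases $n\le r+1$.
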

%I'm leaving the $n \ge 1$ assumption since formally we need a nonempty graph, but I think it could be omitted without confusion. -CT 

In the case $k = r+1$, we prove the following exact result, and characterize the extremal examples.

\begin{thm}
\label{r+1}
Let $r \ge 3$ and $n \ge 1$, and suppose $\h$ is an $n$-vertex $r$-graph with no Berge cycle of length $r+1$ or longer. Then $e(\h) \le n-1$. Moreover, equality is achieved if and only if $\partial_2(\h)$ is connected and for every block $D$ of $\partial_2(\h)$, $D = K_{r+1}$ and $\h[D]$ consists of $r$ hyperedges. 
\end{thm}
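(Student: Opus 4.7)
The plan is to proceed by induction on $n$, reducing first to the case where $\partial_2(\h)$ is $2$-connected. If $\partial_2(\h)$ is disconnected, each hyperedge lies in a single connected component (its $2$-shadow $K_r$ is connected), so summing inductive bounds over components yields $e(\h) \le \sum_i (n_i - 1) < n-1$ unless there is exactly one component. If $\partial_2(\h)$ is connected and has a cut-vertex, then because the $2$-shadow $K_r$ of every single hyperedge is $2$-connected (using $r \ge 3$), each hyperedge is confined to a unique block of $\partial_2(\h)$. The block-tree identity $\sum_B (|V(B)|-1) = n-1$ then reduces the theorem to a \emph{per-block claim}: if $\partial_2(\mathcal G)$ is $2$-connected on $m \ge 3$ vertices and $\mathcal G$ has no Berge cycle of length $\ge r+1$, then $e(\mathcal G) \le m-1$, with equality iff $m = r+1$, $\partial_2(\mathcal G) = K_{r+1}$, and $e(\mathcal G) = r$.

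For the per-block claim, the small cases are direct. When $m \le r$, only one hyperedge fits, so $e(\mathcal G) \le 1 < m - 1$. When $m = r+1$, every hyperedge is of the form $V(\mathcal G) \setminus \{v\}$; if all $r+1$ such hyperedges were present, the Hamilton cycle $v_1 v_2 \cdots v_{r+1} v_1$ is realised as a Berge $(r+1)$-cycle by assigning the pair $\{v_i, v_{i+1}\}$ to the hyperedge $V \setminus \{v_{i+2}\}$ (indices mod $r+1$), which is a valid and injective choice for $r \ge 2$, contradicting the hypothesis. Hence $e(\mathcal G) \le r = m-1$, and a direct check shows that any $r$ hyperedges missing a single $V \setminus \{v_*\}$ yield $\partial_2 = K_{r+1}$ and prevent a Berge $(r+1)$-cycle (by pigeonhole: $r+1$ cycle-pairs need $r+1$ distinct hyperedges, but only $r$ are available), giving the extremal configuration.

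The core of the proof is the case $m \ge r+2$, where I would establish strict inequality $e(\mathcal G) \le m - 1$. Assume for contradiction $e(\mathcal G) \ge m$. By Theorem~\ref{DavoodiGMTthm}, possibly together with a refinement exploiting $2$-connectedness, $\mathcal G$ contains a Berge path $P = v_1 v_2 \cdots v_{p+1}$ of length $p \ge r+1$ with hyperedges $e_1, \ldots, e_p$. By Menger's theorem, the $2$-connectedness of $\partial_2(\mathcal G)$ yields two internally vertex-disjoint paths between $v_1$ and $v_{p+1}$ in $\partial_2(\mathcal G)$; the one external to $\{v_2,\ldots,v_p\}$ together with the shadow of $P$ forms a cycle $C$ in $\partial_2(\mathcal G)$ of length at least $r+2$. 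A Hall-type matching argument then realises $C$ as a Berge cycle of length $\ge r+1$, naturally assigning $e_1, \ldots, e_p$ to the pairs on $P$ and the remaining $\ge e(\mathcal G) - p$ hyperedges to the pairs on the closing path, contradicting the hypothesis.

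The main obstacle is precisely this Berge-realisation: checking the Hall condition for the closing segment, particularly near the endpoints $v_1, v_{p+1}$ where $e_1$ and $e_p$ may conflict with the natural hyperedge choices, and handling exceptional configurations by locally modifying either $P$ or the closing path to recover a valid matching. Once the per-block claim is established, the equality characterization falls out by tracing the reductions: global equality forces connectedness of $\partial_2(\h)$ and equality in every block, yielding that each block is $K_{r+1}$ carrying exactly $r$ hyperedges, as asserted.
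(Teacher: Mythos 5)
Your outer skeleton (induction, block decomposition of $\partial_2(\h)$, the observation that each hyperedge's shadow $K_r$ is $2$-connected and hence lives in a unique block, the identity $\sum_B(|V(B)|-1)=n-1$, and the complete analysis of the case $m=r+1$) is correct and is a legitimate alternative framing. But the core of the argument --- the case of a $2$-connected shadow on $m\ge r+2$ vertices --- has two genuine gaps, and they are precisely where the difficulty of the theorem lives. First, Menger's theorem gives two internally disjoint $v_1$--$v_{p+1}$ paths in $\partial_2(\mathcal G)$, but it does \emph{not} give one that avoids the interior $\{v_2,\dots,v_p\}$ of your long Berge path: both Menger paths may run through interior vertices of $P$ (think of a theta graph), so you cannot simply ``close'' $P$ into a cycle $C$ of length $\ge r+2$ in the shadow. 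Second, and more seriously, even granting such a cycle $C$ in $\partial_2(\mathcal G)$, realising it as a Berge cycle requires a system of distinct hyperedge representatives for \emph{all} edges of $C$, and Hall's condition has no reason to hold: the edges of the closing segment might be covered only by hyperedges among $e_1,\dots,e_p$ already committed to $P$ (the count ``$\ge e(\mathcal G)-p$ remaining hyperedges'' says nothing about which pairs those hyperedges contain). This shadow-to-Berge lifting is exactly the obstruction that makes Berge-cycle Tur\'an problems hard, and your proposal defers it to ``a Hall-type matching argument'' without an actual argument. A smaller third gap: Theorem~\ref{DavoodiGMTthm} only yields a Berge path of length $r+1$ when $e(\mathcal G)>m$, so the boundary case $e(\mathcal G)=m$ of your contradiction hypothesis is not covered (and the equality characterisation of that theorem is proved in the paper \emph{from} Theorem~\ref{r+1}, so it is not available to you).

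The paper avoids both obstacles by a different mechanism: Hall's condition applied to the bipartite vertex--hyperedge incidence graph (justified by the deletion argument in property~(1)) produces an injection $\phi$ assigning to each vertex (except possibly one) a hyperedge containing it; one then takes a longest Berge path whose $i$-th hyperedge is $\phi(v_i)$ and performs P\'osa-type rotations (Claims~\ref{last_edge}--\ref{finding_many_sets}) to force $r-1$ distinct hyperedges inside the last $r+1$ vertices of the path. Combined with Lemmas~\ref{r_path_between_any_pair} and~\ref{r-1pathbetweenanypair} and the no-cut-hyperedge reduction, this pins down a block of $\partial_2(\h)$ with at most $|D|-1$ hyperedges, which is then contracted to apply induction (with a separate, delicate case analysis for $r=3$). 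If you want to pursue your route, you would need to replace the Menger-plus-Hall step with some version of this rotation/SDR machinery; as written, the proposal does not constitute a proof.
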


Note that Theorem \ref{r+1} easily implies Theorem \ref{DavoodiGMTthm}. In fact, it gives the following stronger form. Here we quickly prove this implication.

\begin{thm}
Fix $k = r+1>2$ and let $\h$ be an $r$-uniform hypergraph containing no Berge path of length $k$.  Then, $e(\h) \le \frac{n}{k} \binom{k}{r} = n.$ Moreover, equality holds if and only if each connected component $D$ of $\partial_2(\h)$ is $K_{r+1}$, and $\h[D] = K_{r+1}^r$.
\end{thm}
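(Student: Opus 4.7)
The plan is to deduce the result from Theorem~\ref{r+1} by a componentwise argument. As a preliminary, any Berge cycle of length $\ell$ contains a Berge path of length $\ell - 1$ (delete any single hyperedge of the cycle), so the hypothesis that $\h$ admits no Berge path of length $r+1$ rules out every Berge cycle of length $\ge r+2$. Let $D_1,\ldots,D_m$ be the connected components of $\partial_2 \h$, and set $\h_i := \h[D_i]$ and $n_i := |D_i|$. Each $\h_i$ inherits the hypothesis, so for each $i$ we are in one of two cases: (i) $\h_i$ contains no Berge cycle of length $\ge r+1$, or (ii) $\h_i$ contains a Berge cycle of length exactly $r+1$.

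In case (i), Theorem~\ref{r+1} directly gives $e(\h_i) \le n_i - 1$. In case (ii), I claim $\h_i = K_{r+1}^r$, so that $e(\h_i) = r+1 = n_i$. Fix a Berge cycle on vertices $v_1,\ldots,v_{r+1}$ and hyperedges $e_1,\ldots,e_{r+1}$ with $\{v_i,v_{i+1}\} \subseteq e_i$ (indices mod $r+1$). Suppose for contradiction that $V(\h_i) \setminus \{v_1,\ldots,v_{r+1}\} \ne \emptyset$. By connectivity of $\partial_2 \h_i$, there is a hyperedge $f \in E(\h_i)$ meeting both $V(\h_i) \setminus \{v_1,\ldots,v_{r+1}\}$ and $\{v_1,\ldots,v_{r+1}\}$; let $u \in f$ be an off-cycle vertex. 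If $f \notin \{e_1,\ldots,e_{r+1}\}$, pick $v_t \in f$; then $u, f, v_t, e_t, v_{t+1}, e_{t+1}, \ldots, e_{t-2}, v_{t-1}$ is a Berge path of length $r+1$ on $r+2$ distinct vertices, contradicting the hypothesis. Otherwise $f = e_j$ for some $j$; since $v_{j+1} \in e_j$, the walk $u, e_j, v_{j+1}, e_{j+1}, v_{j+2}, \ldots, e_{j-1}, v_j$ uses all $r+1$ cycle hyperedges exactly once and is again a Berge path of length $r+1$, a contradiction. Hence $V(\h_i) = \{v_1,\ldots,v_{r+1}\}$; every hyperedge of $\h_i$ is an $r$-subset of this $(r+1)$-set, so $e(\h_i) \le \binom{r+1}{r} = r+1$, with equality exactly when $\h_i = K_{r+1}^r$.

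Summing over components yields $e(\h) = \sum_i e(\h_i) \le \sum_i n_i = n$, and equality forces every component into case (ii) with $\h_i = K_{r+1}^r$, matching the claimed characterization. The only delicate point is the subcase $f = e_j$: we must use $e_j$ exactly once along the constructed path, which is arranged by first stepping from $u$ via $e_j$ into the cycle at $v_{j+1}$ and then traversing the remaining $r$ cycle edges in order around the cycle.
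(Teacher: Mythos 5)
Your proposal is correct and follows essentially the same route as the paper: both hinge on the observation that a Berge cycle of length $r+1$ (the only possible long cycle here) cannot be touched by any off-cycle vertex without creating a Berge path of length $r+1$, forcing its vertex set to be an entire component on $r+1$ vertices. The only cosmetic difference is that you sum over the components of $\partial_2(\h)$ directly, while the paper peels off one such component and inducts on $n$.
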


\begin{proof}
We proceed by induction on $n$. The base cases $n \le r+1$ are easy to check.
Let $\h$ be an $r$-uniform hypergraph containing no Berge path of length $k = r+1$ such that $e(\h) \ge n$. Then by Theorem \ref{r+1}, $\h$ contains a Berge cycle $\mathcal C$ of length $r+1$ or longer. $\mathcal C$ must be of length exactly $r+1$, otherwise it would contain a Berge path of length $r+1$. Let $v_1, \ldots, v_{r+1}$ and $e_1, \ldots, e_{r+1}$ be the vertices and edges of $\mathcal C$ where $\{v_i, v_{i+1}\} \subseteq e_i$ (indices are taken modulo $r+1$). For any $i$ with $1 \le i \le r+1$, if $e_i$ contains a vertex $v \not \in \{v_1, \ldots, v_{r+1}\}$, then $v_{i+1} e_{i+1} v_{i+2} e_{i+2} \ldots e_{i-1} v_i e_i v$ forms a Berge path of length $r+1$ in $\h$, a contradiction. Therefore, all of the edges $e_i$ (for $1 \le i \le r+1$) are contained in the set $S := \{v_1, \ldots, v_{r+1}\}$. That is, $\h[S] = K_{r+1}^r$. It is easy to see that $S$ forms a connected component in $\partial_2(\h)$ because if any hyperedge $h$ of $\h$ (with $h \not \in \mathcal C$) contains a vertex of $\mathcal C$, then  $\mathcal C$ can be extended to form a Berge path of length $r+1$. 

Let $S_1, S_2, \ldots, S_t$ be the vertex sets of connected components of $\partial_2(\h)$. As noted before, one of them, say $S_1$, is equal to $S$. We delete the vertices of $S_1$ from $\h$ to form a new hypergraph $\h'$; note that $\abs{V(\h')} = \abs{V(\h)} -(r+1)$ and $\abs{E(\h')} = \abs{E(\h)} - (r+1)$ and the connected components of $\partial_2(\h')$ are $S_2, \ldots, S_t$.
By induction $\abs{E(\h')} \le \abs{V(\h')}$. Thus $\abs{E(\h)} = \abs{E(\h')} + (r+1) \le \abs{V(\h')} + (r+1) =  \abs{V(\h)}$. Moreover, if $\abs{E(\h)} = \abs{V(\h)}$, then $\abs{E(\h')} = \abs{V(\h')}$, so by the induction hypothesis each connected component $S_i$ ($i \ge 2$) of $\partial_2(\h')$ is $K_{r+1}$, and $\h'[S_i] = K_{r+1}^r$, proving the theorem.
%maybe some more explanation that C1, C2... are connected components or is it enough?
\end{proof}

\vspace{2mm}

\textbf{Structure of the paper:} In Section \ref{basic_lemmas}, we prove some basic lemmas which are used in our proofs. In Section \ref{sec:proofforr+2case}, we prove Theorem \ref{r+2},  and in Section \ref{sec:proofforr+1case}, we prove Theorem \ref{r+1}.

\section{Basic Lemmas}
\label{basic_lemmas}

We will use the following two lemmas.

\begin{lem}
\label{r_path_between_any_pair}
For any $r \ge 3$, if a set $S$ of size $r+1$ contains $r$ hyperedges of size $r$, then between any two vertices $u, v \in S$, there is a Berge path of length $r$ consisting of these hyperedges. 
\end{lem}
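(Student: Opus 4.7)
The plan is to reformulate this as a bipartite matching problem and then apply Hall's marriage theorem. First I would use the fact that each $r$-element subset of the $(r+1)$-element set $S$ is uniquely determined by the single vertex it omits: the $r$ given hyperedges therefore correspond to $r$ distinct omitted vertices of $S$, and there is exactly one vertex $w\in S$ lying in all of them. For each $x\in S\setminus\{w\}$ I would write $e_x$ for the hyperedge missing $x$.

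Next I would fix an arbitrary ordering $u=u_0,u_1,\dots,u_r=v$ of the vertices of $S$, which realizes a Hamiltonian path $P$ from $u$ to $v$ in the complete graph on $S$. Producing a Berge path of length $r$ from $u$ to $v$ using these hyperedges then amounts to choosing a bijection $\phi\colon E(P)\to S\setminus\{w\}$ with $\phi(f)\notin f$ for every $f\in E(P)$, since $\phi(f)\notin f$ is exactly equivalent to $f\subseteq e_{\phi(f)}$. To obtain such a $\phi$ I would set up the auxiliary bipartite graph with parts $E(P)$ and $S\setminus\{w\}$, joining $f$ to $x$ precisely when $x\notin f$, and verify Hall's condition for it.

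Checking Hall's condition is the technical heart of the argument. For any $T\subseteq E(P)$, the set of $x\in S\setminus\{w\}$ that are non-neighbors of every edge of $T$ is contained in the common endpoint intersection $\bigcap_{f\in T}f$. When $|T|\ge 3$ this intersection is empty, because a vertex of a simple path is incident to at most two of its edges, so $|N(T)|=r\ge|T|$. When $|T|=1$ or $|T|=2$ the intersection has at most $2$ or at most $1$ elements respectively, and here the hypothesis $r\ge 3$ is exactly what is needed to guarantee $|N(T)|\ge|T|$. Hall's theorem then delivers $\phi$, and reading off the hyperedges $e_{\phi(f)}$ along $P$ yields the required Berge path.

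The step I would expect to be most delicate is the verification of Hall for small $|T|$, where the bounds are tight and the assumption $r\ge 3$ is used essentially; by contrast, the large-$|T|$ case is immediate once one records that no vertex of a Hamiltonian path is incident to three of its edges, which is what rescues the matching when the two sides of the bipartite graph both have size $r$.
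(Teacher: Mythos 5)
Your proposal is correct and follows essentially the same route as the paper: both take an arbitrary Hamiltonian ordering of $S$ from $u$ to $v$, form the auxiliary bipartite graph between the $r$ path edges and the $r$ hyperedges (which you parametrize by their omitted vertices), and verify Hall's condition by the same case split on $|T|=1,2,\ge 3$ using the fact that a path vertex meets at most two path edges. The only cosmetic difference is that you phrase adjacency as ``$x\notin f$'' for the omitted vertex $x$ rather than ``$f\subseteq h$'' for the hyperedge $h$, which is the same relation.
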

\begin{proof}
Let $\h$ be the hypergraph consisting of $r$ hyperedges on $r+1$ vertices. First notice that for any pair of vertices $x,y \in S$, the number of hyperedges $h \subset S$ such that $\{x,y\} \not \subset h$ is at most 2. (Indeed, there is at most one hyperedge that does not contain $x$ and at most one hyperedge that does not contain $y$.) This means that every pair $x,y \in S$ is contained in some hyperedge, as there are at least 3 hyperedges contained in $S$. In other words, $\partial_2(\h) =K_{r+1}$.

Consider an arbitrary path $x_1x_2,\ldots,x_{r+1}$ of length $r$ in the $\partial_2(\h)$ connecting $u=x_1$ and $v=x_{r+1}$. We want to show that there are distinct hyperedges containing the pairs $x_ix_{i+1}$ for each $1 \le i \le r$. To this end, we consider an auxiliary bipartite graph with pairs $\{x_1x_2, x_2x_3, \ldots, x_rx_{r+1}\}$ in one class and the $r$ hyperedges $h \subset S$ in the other class, and a pair is connected to a hyperedge if it is contained in the hyperedge. We will show that Hall's condition holds: As noted before, every pair is contained in a hyperedge. Given any two distinct pairs $x_ix_{i+1}$ and $x_jx_{j+1}$, there is at most one hyperedge that does not contain either of them; i.e., at least $r-1$ hyperedges contain one of them. Thus we need $2 \le r-1$ for Hall's condition to hold, but this is true as we assumed $r \ge 3$. Moreover, if we take any $3 \le j \le r$ distinct pairs, then every hyperedge contains one of them. Therefore, we need $j \le r$, but this is true by assumption. This finishes the proof of the lemma.
\end{proof}

\begin{lem}
\label{r-1pathbetweenanypair}
For any $r \ge 4$, if a set $S$ of size $r+1$ contains $r-1$ hyperedges of size $r$, then between any two vertices $u, v \in S$, there is a Berge path of length $r-1$ consisting of these hyperedges. 
\end{lem}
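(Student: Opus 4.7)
The plan is to mimic the proof of Lemma~\ref{r_path_between_any_pair}. First I would show that the 2-shadow $\partial_2(\h[S])$ is the complete graph $K_{r+1}$; then, for any $u,v \in S$, pick any path $u = x_1 x_2 \cdots x_r = v$ of length $r-1$ in that 2-shadow, skipping any one vertex $w \in S \setminus \{u,v\}$; and finally apply Hall's marriage theorem to assign the $r-1$ hyperedges bijectively to the $r-1$ consecutive pairs of the path so that each pair is contained in its assigned hyperedge.

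For the first step, since the $r-1$ distinct hyperedges are each the complement in $S$ of a single vertex, they are in bijection with the $r-1$ vertices they omit, leaving exactly two vertices $a, b \in S$ contained in every hyperedge. A pair $\{x,y\} \subseteq S$ is avoided only by the hyperedges whose omitted vertex lies in $\{x,y\}$, hence by at most two hyperedges, so at least $r-3 \ge 1$ hyperedges contain it. Thus $\partial_2(\h[S]) = K_{r+1}$, and the desired length-$(r-1)$ path in the 2-shadow exists for every choice of $u,v$ and $w$.

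For the matching step, I would build the bipartite graph whose classes are the $r-1$ path-pairs and the $r-1$ hyperedges, with a pair joined to a hyperedge iff it is contained in it. To verify Hall's condition for a subset $T$ of path-pairs, I would count the hyperedges not adjacent to $T$: any such hyperedge must omit a vertex common to every pair in $T$. When $|T|=1$ this yields at most $2$ bad hyperedges; when $|T|=2$, since two distinct pairs share at most one vertex, at most $1$ bad hyperedge; and when $|T| \ge 3$, the pairs form a subpath of a path, so no vertex lies in all of them, giving $0$ bad hyperedges. Hence $|N(T)| \ge |T|$ in every case, with the tightest inequalities being $r-3 \ge 1$ (for $|T|=1$) and $r-2 \ge 2$ (for $|T|=2$); these are exactly where the hypothesis $r \ge 4$ enters.

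The main technical point will be the $|T| = 2$ case: when two consecutive path-pairs share a common midpoint that happens to coincide with one of the $r-1$ omitted vertices, one hyperedge is ruled out, and only $r \ge 4$ leaves enough slack. Once Hall's theorem produces the required perfect matching, the corresponding assignment of hyperedges to consecutive path-pairs realizes a Berge path of length $r-1$ from $u$ to $v$, as desired.
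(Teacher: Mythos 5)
Your proposal is correct and follows essentially the same route as the paper: establish that $\partial_2(\h[S])=K_{r+1}$, pick any length-$(r-1)$ path from $u$ to $v$ in the shadow, and verify Hall's condition for the bipartite graph between consecutive path-pairs and hyperedges, with the cases $|T|=1,2,\ge 3$ handled by exactly the same counts (and $r\ge 4$ entering at $|T|=2$). The only difference is cosmetic: you make explicit the bijection between hyperedges and their omitted vertices, which the paper leaves implicit.
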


\begin{proof}
The proof is similar to that of Lemma \ref {r_path_between_any_pair}. Let $\h$ be the hypergraph consisting of $r-1$ hyperedges on $r+1$ vertices. First notice that for any pair of vertices $x,y \in S$, the number of hyperedges $h \subset S$ such that $\{x,y\} \not \subset h$ is at most 2. This means that every pair $x,y \in S$ is contained in some hyperedge, as there are at least $r-1 \ge 3$ hyperedges contained in $S$. In other words, $\partial_2(\h) = K_{r+1}$.

Consider an arbitrary path $x_1x_2\ldots x_{r}$ of length $r-1$ in the $\partial_2(\h)$ connecting $u=x_1$ and $v=x_r$. We want to show that there are distinct hyperedges containing the pairs $x_ix_{i+1}$ for each $1 \le i \le r-1$. To this end, we consider an auxiliary bipartite graph with pairs $\{x_1x_2, x_2x_3, \ldots, x_{r-1}x_r\}$ in one class and the $r-1$ hyperedges $h \subset S$ in the other class, and a pair is connected to a hyperedge if it is contained in the hyperedge. We show that Hall's condition holds: As noted before, every pair is contained in a hyperedge. Given any two distinct pairs $x_ix_{i+1}$ and $x_jx_{j+1}$, there is at most one hyperedge that does not contain either of them; i.e., at least $r-2$ hyperedges contain one of them. Thus we need $2 \le r-2$ for Hall's condition to hold, but this is true as we assumed $r \ge 4$. Moreover, if we take any $3 \le j \le r-1$ distinct pairs, then every hyperedge contains one of them. Therefore, we need $j \le r-1$ for Hall's condition to hold, and this is true by assumption. This finishes the proof of the lemma.
\end{proof}

\section{Proof of Theorem \ref{r+2} ($k = r+2$)}
\label{sec:proofforr+2case}

We will prove the theorem by induction on $n$. For the base cases, note that if $1 \le n \le r$ then the statement of the theorem is trivially true. If $n = r+1$, the statement is true since there are at most $r+1$ hyperedges of size $r$ on $r+1$ vertices. Moreover, equality holds if and only if $\h = K_{r+1}^r$. 

We will show the statement is true for $n \ge r+2$ assuming it is true for all smaller values. Let $\h$ be an $r$-uniform hypergraph on $n$ vertices having no Berge cycle of length $r+2$ or longer. We show that we may assume the following two properties hold for $\h$.

\begin{enumerate}
\item[(1)] For any set $S \subseteq V(\h)$ of vertices, the number of hyperedges of $\h$ incident to the vertices of $S$ is at least $|S|$. 

Indeed, suppose there is a set $S \subseteq V(\h)$ with fewer than $|S|$ hyperedges incident to the vertices of $S$. If $\abs{S}=n$ we immediately have the required bound on $e(\h)$, so assume $n>\abs{S}$.  We can delete the vertices of $S$ from $\h$ to obtain a new hypergraph $\h'$ on $n-|S|$ vertices. By induction, $\h'$ contains at most $\frac{r+1}{r}(n-|S|-1)$ hyperedges, so $\h$ contains less than $\frac{r+1}{r}(n-1-|S|)+|S| < \frac{r+1}{r}(n-1)$ hyperedges, as desired. %On the other hand, if $n-|S| \le 1$, then since $r \ge 3$, every hyperedge of $\h$ is incident to a vertex of $S$, so $e(\h) < |S| \le n$, as required.

\item[(2)] There is no cut-hyperedge in $\h$.
%check when V_1 has only one vertex.

Indeed, if $h \in E(\h)$ is a cut-hyperedge, then $\partial_2(\h \setminus \{h\})$ is not a connected graph, so there are non-empty disjoint sets $V_1$ and $V_2$ such that $V(\h) = V_1 \cup V_2$, and there are no edges of $\partial_2(\h \setminus \{h\})$ between $V_1$ and $V_2$. So both hypergraphs $\h[V_1]$ and $\h[V_2]$ do not contain a Berge cycle of length $r+2$ or longer. By induction, $e(\h[V_1]) \le \frac{r+1}{r}(|V_1|-1)$ and $e(\h[V_2]) \le \frac{r+1}{r}(|V_2|-1)$. In total, $e(\h) = e(\h[V_1]) + e(\h[V_2]) + 1 \le \frac{r+1}{r}(|V_1|+|V_2|-2)+1 < \frac{r+1}{r}(|V(\h)|-1)$, as desired. 

%If $|V_1| \ge 2$ and $\abs{V_2} \ge 2$, then 

%On the other hand, if $|V_1| =1$, then since $|V_2| = n-1 \ge 2$, by induction, we have $e(\h[V_2]) \le \frac{r+1}{r}(|V_2|-1)$. In total, we have $e(\h) = e(\h[V_2]) + 1 \le \frac{r+1}{r}(n-2) + 1 < \frac{r+1}{r}(n-1)$, as desired. The case when $|V_2|=1$ goes similarly.

\end{enumerate}

Consider an auxiliary bipartite graph $B$ consisting of vertices of $\h$ in one class and hyperedges of $\h$ on the other class. Then property (1) shows that Hall's condition holds. Therefore, there is a perfect matching in $B$. In other words, there exists an injection ${f:V(\h)\to E(\h)}$ such that $v \in f(v)$. 

Given an injection $f:V(\h)\to E(\h)$ with $v \in f(v)$, let $\P_f$ be a longest Berge path of the form $v_1f(v_1)v_2f(v_2)\ldots v_{l-1}f(v_{l-1})v_l$  where for each $1 \le i \le l-1$, $v_{i+1} \in f(v_i)$. Moreover, among all injections $f:V(\h) \to E(\h)$ with $v \in f(v)$, suppose $\phi:V(\h) \to E(\h)$ is an injection for which the path $\P_{\phi} = v_1\phi(v_1)v_2\phi(v_2) \ldots v_{l-1}\phi(v_{l-1})v_l$ is a longest path. 

\begin{claim}
\label{r+1_set}
$\phi(v_l) \subset \{v_{l-r}, v_{l-r+1}, \ldots, v_{l-1}, v_l\}$.
\end{claim}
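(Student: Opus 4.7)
The plan is to argue by contradiction. Suppose $\phi(v_l)$ contains some vertex $u \notin \{v_{l-r}, v_{l-r+1}, \ldots, v_{l-1}, v_l\}$. Since $v_l \in \phi(v_l)$ automatically, such a $u$ must differ from $v_l$, so exactly one of the following holds: either (a) $u$ does not appear on $\P_\phi$ at all, or (b) $u = v_i$ for some index $i$ with $1 \le i \le l - r - 1$. I would handle these two cases separately.

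In case (a), I would simply lengthen $\P_\phi$ by appending the hyperedge $\phi(v_l)$ and the vertex $u$, obtaining
\[
v_1\,\phi(v_1)\,v_2\,\phi(v_2)\,\cdots\,v_{l-1}\,\phi(v_{l-1})\,v_l\,\phi(v_l)\,u.
\]
Its vertices are distinct since $v_1, \ldots, v_l$ are distinct along $\P_\phi$ and $u \notin \{v_1, \ldots, v_l\}$ by the case hypothesis, and its hyperedges are distinct because $\phi$ is an injection. Moreover, the required Berge pair $\{v_l, u\} \subseteq \phi(v_l)$ holds by the choice of $u$, so this is a path of precisely the form in the definition of $\P_\phi$, but of length $l$ rather than $l - 1$. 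This contradicts the maximality of $\P_\phi$.

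In case (b), I would close up a Berge cycle on the vertex sequence $v_i, v_{i+1}, \ldots, v_l$ using the hyperedges $\phi(v_i), \phi(v_{i+1}), \ldots, \phi(v_l)$. The consecutive containments $\{v_j, v_{j+1}\} \subseteq \phi(v_j)$ for $i \le j \le l-1$ come for free from the construction of $\P_\phi$, while the closing containment $\{v_l, v_i\} \subseteq \phi(v_l)$ holds by our assumption $u = v_i \in \phi(v_l)$. The $l - i + 1$ chosen vertices are distinct, and the same number of hyperedges are distinct by the injectivity of $\phi$. Since $i \le l - r - 1$, the cycle has length $l - i + 1 \ge r + 2$, contradicting the standing assumption that $\h$ contains no Berge cycle of length $r + 2$ or longer.

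The whole argument is essentially bookkeeping, and I do not anticipate any serious obstacle. The only items needing explicit verification are the distinctness of vertices and of hyperedges in each configuration; the former follows from the case hypothesis together with the fact that the $v_i$ are distinct along $\P_\phi$, while the latter is immediate from $\phi$ being an injection. The conceptual content is just the dichotomy ``extend the path or close a cycle,'' exactly the same dichotomy that underpins classical rotation-extension arguments for long cycles.
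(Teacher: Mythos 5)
Your proposal is correct and follows essentially the same argument as the paper: if $\phi(v_l)$ meets a vertex $v_i$ with $i\le l-r-1$ you close a Berge cycle of length at least $r+2$, and if it meets a vertex off the path you extend $\P_\phi$, contradicting maximality. The extra bookkeeping about distinctness of vertices and hyperedges is implicit in the paper's version but adds nothing new.
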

\begin{proof}
First notice that if $\phi(v_l)$ contains a vertex $v_i \in \{v_1,v_2, \ldots, v_{l-r-1}\}$, then the Berge cycle $v_i\phi(v_i)v_{i+1}\phi(v_{i+1}) \ldots v_l\phi(v_l)v_i$ is of length $r+2$ or longer, a contradiction. Moreover, if $\phi(v_l)$ contains a vertex $v \not \in \{v_1,v_2, \ldots, v_{l}\}$, then $\P_{\phi}$ can be extended to a longer path $v_1\phi(v_1)v_2\phi(v_2) \ldots v_{l-1}\phi(v_{l-1})v_l\phi(v_l)v$, a contradiction. This completes the proof of the claim.
\end{proof}

By Claim \ref{r+1_set}, we know that $\phi(v_l) = \{v_{l-r}, v_{l-r+1}, \ldots, v_{l-1}, v_l\} \setminus \{v_j\}$ for some $l-r \le j \le l-1$. 

\begin{claim}
\label{finding_many_r-sets}
For any $i \in \{l-r, l-r+1, \ldots, l\} \setminus \{j\}$, we have $\phi(v_i) \subset \{v_{l-r}, v_{l-r+1}, \ldots, v_{l-1}, v_l\}$.
\end{claim}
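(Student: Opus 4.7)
The plan is to reduce Claim \ref{finding_many_r-sets} to Claim \ref{r+1_set} by rerouting $\P_\phi$ through the edge $\phi(v_l)$. Write $S := \{v_{l-r}, \ldots, v_l\}$, so that $\phi(v_l) = S \setminus \{v_j\}$. The case $i = l$ is Claim \ref{r+1_set} itself, so I fix $i \in \{l-r, \ldots, l-1\} \setminus \{j\}$. The goal is to produce a longest Berge path $\P'$ for a new injection $\phi'$ whose last vertex is $v_{i+1}$ and whose ``unused'' edge $\phi'(v_{i+1})$ equals $\phi(v_i)$; then applying Claim \ref{r+1_set} to this configuration forces $\phi(v_i) \subset S$.

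Since $v_i, v_l \in \phi(v_l)$ (as $i \neq j$ and $i \ge l-r$), I form the Berge path
\[
\P' := v_1\,\phi(v_1)\,\ldots\,v_{i-1}\,\phi(v_{i-1})\,v_i\,\phi(v_l)\,v_l\,\phi(v_{l-1})\,v_{l-1}\,\ldots\,v_{i+2}\,\phi(v_{i+1})\,v_{i+1},
\]
obtained by following $\P_\phi$ up to $v_i$, hopping to $v_l$ through $\phi(v_l)$, and then traversing the tail of $\P_\phi$ in reverse. Its vertex sequence is a permutation of $v_1, \ldots, v_l$ and its edge set is $\{\phi(v_m) : 1 \le m \le l\} \setminus \{\phi(v_i)\}$, so $\P'$ still has length $l-1$. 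Relabeling the vertices of $\P'$ in order as $u_1, \ldots, u_l$ (in particular $u_l = v_{i+1}$), I define $\phi'$ to agree with $\phi$ off $\{v_1, \ldots, v_l\}$, to assign to each $u_k$ with $k < l$ the $k$-th edge of $\P'$, and to set $\phi'(u_l) := \phi(v_i)$; this is valid since $\phi(v_i)$ is the unique $\phi$-image not appearing on $\P'$ and $v_{i+1} \in \phi(v_i)$. A routine check shows $\phi'$ is an injection with $v \in \phi'(v)$ for every $v$, and by the global maximality of $\P_\phi$, the path $\P'$ is itself a longest path for $\phi'$, so we may take $\P_{\phi'} = \P'$.

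Applying the reasoning of Claim \ref{r+1_set} to $\phi'$ and $\P_{\phi'} = \P'$ yields $\phi'(u_l) \subset \{u_{l-r}, \ldots, u_l\}$. Since $i \ge l-r$, the relabeling gives $\{u_{l-r}, \ldots, u_l\} = \{v_{l-r}, \ldots, v_i\} \cup \{v_l, v_{l-1}, \ldots, v_{i+1}\} = S$, so $\phi(v_i) = \phi'(u_l) \subset S$, as required. The main subtlety is bookkeeping: verifying that $\phi'$ is a legitimate injection and that $(\phi', \P')$ inherits the longest-path hypothesis, both of which follow automatically from the construction and the global maximality of $\P_\phi$.
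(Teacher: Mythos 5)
Your proof is correct and is essentially the paper's own argument: the paper likewise reroutes $\P_\phi$ through $\phi(v_l)$ to obtain an equally long path ending at $v_{i+1}$ with unused edge $\phi(v_i)$, defines the corresponding re-indexed injection (called $\psi$ there), notes that the set of last $r+1$ vertices is unchanged, and then invokes Claim \ref{r+1_set}. The only differences are notational (your relabeling $u_1,\ldots,u_l$ versus the paper's direct definition of $\psi$ on the $v_k$).
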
 
\begin{proof}
When $i =l$, we know the statement is true. Suppose $i \in \{l-r, l-r+1, \ldots, l-1\} \setminus \{j\}$. Let us define a new injection $\psi: V(\h) \to E(\h)$ as follows: $\psi(v) = \phi(v)$ for every $v \not \in \{v_1,v_2, \ldots, v_l\}$, and for every $v \in \{v_1, v_2, \ldots, v_{i-1}\}$. Moreover, let $\psi(v_i) = \phi(v_l)$ and $\psi(v_k) = \phi(v_{k-1})$ for each $l \ge k \ge i+1$. 

Now consider the Berge path $v_1\phi(v_1)v_2\phi(v_2) \ldots v_i\phi(v_l)v_l\phi(v_{l-1}) \ldots v_{i+2}\phi(v_{i+1})v_{i+1}$, equivalently 
$v_1\psi(v_1)v_2\psi(v_2) \ldots v_i\psi(v_i)v_l\psi(v_{l}) \ldots v_{i+2}\psi(v_{i+2})v_{i+1}$. This path has the same length as $\P_{\phi}$, so it is also a longest path. Moreover, notice that the sets of last $r+1$ vertices of both paths are the same. Thus we can apply Claim \ref{r+1_set} to conclude that $\phi(v_i) = \psi(v_{i+1}) \subset \{v_{l-r}, v_{l-r+1}, \ldots, v_{l-1}, v_l\}$, as desired. 
\end{proof}

Claim \ref{finding_many_r-sets} shows that there are $r$ hyperedges (each of size $r$) contained in the set $S := \{v_{l-r}, v_{l-r+1},\ldots,$ $v_{l-1}, v_l\}$ of size $r+1$. We will apply Lemma \ref{r_path_between_any_pair} to $S$.

\begin{claim}
\label{finding_one_block}
The set $S= \{v_{l-r}, v_{l-r+1}, \ldots, v_{l-1}, v_l\}$ induces a block of $\partial_2(\h)$.
\end{claim}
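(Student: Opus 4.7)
My plan is to prove Claim \ref{finding_one_block} by contradiction: assuming $S$ is not the vertex set of a block of $\partial_2(\h)$, I will exhibit an injection $\phi'\colon V(\h) \to E(\h)$ with $v\in\phi'(v)$ for all $v$, such that $\P_{\phi'}$ has length $l$, contradicting the maximality of $\P_\phi$ (whose length is $l-1$).

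By Claim \ref{finding_many_r-sets}, $S$ contains $r$ hyperedges of $\h$, so Lemma \ref{r_path_between_any_pair} gives $\partial_2(\h[S]) = K_{r+1}$, placing $S$ inside some block $B$ of $\partial_2(\h)$, and moreover supplies a Berge path of length $r$ between any two vertices of $S$ using exactly these $r$ hyperedges. If $B$ strictly contained $S$, then by 2-connectivity each connected component of $B\setminus S$ would have at least two neighbours in $S$; and since $\abs{S} = r+1 \geq 4$, I may then fix some $v_i \in S$ with $i \neq l-r$, a vertex $x \in V(\h)\setminus S$ adjacent to $v_i$ in $\partial_2(\h)$, and a hyperedge $h \in E(\h)$ with $\{v_i, x\} \subseteq h$.

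Applying Lemma \ref{r_path_between_any_pair} once more, I take a Berge path $P$ of length $r$ from $v_{l-r}$ to $v_i$ inside $\h[S]$ using the hyperedges $\phi(v_m)$ for $m \in \{l-r, \ldots, l\} \setminus \{j\}$. Splicing $P$ into the initial segment of $\P_\phi$ and appending the step $h,x$ produces the candidate walk
\[v_1\,\phi(v_1)\,v_2\,\phi(v_2) \cdots v_{l-r-1}\,\phi(v_{l-r-1})\,v_{l-r}\,P\,v_i\,h\,x,\]
on $l+1$ vertices and $l$ hyperedges. Hyperedge-distinctness is clear from injectivity of $\phi$ and the fact that $h$ contains $x \notin S$. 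Vertex-distinctness reduces to checking $x \notin \{v_1, \ldots, v_{l-r-1}\}$: in the contrary case $x = v_k$ with $k \leq l-r-1$, the walk closes into a Berge cycle of length $l-k+1 \geq r+2$, contradicting the hypothesis on $\h$.

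The main obstacle is to promote this extended Berge path into $\P_{\phi'}$ for a suitable injection $\phi'$. I would define $\phi'$ to agree with $\phi$ on $v_1, \ldots, v_{l-r-1}$ and on vertices outside $S \cup \{x\}$, permute the $\phi$-values on $S$ so that $\phi'$ traces the hyperedges of $P$ in order, set $\phi'(v_i) = h$, and re-complete $\phi'$ on $x$ (and on any vertex previously mapped by $\phi$ to $h$) using Hall's condition from property (1). The delicate subcase is when $h$ coincides with some $\phi(v_k)$ already used on $\P_\phi$; the extra vertices $v_k, v_{k+1} \in h$ then allow either a direct short-cycle argument producing a Berge cycle of length $\geq r+2$, or a local rearrangement of the injection. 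Either way, $\P_{\phi'}$ has length $l > l-1$, contradicting the choice of $\phi$, and the claim follows.
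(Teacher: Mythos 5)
Your approach is genuinely different from the paper's: rather than constructing a forbidden Berge cycle directly, you try to extend $\P_\phi$ by one vertex and contradict the maximality of $\phi$. The reduction is fine up to the splice: if the block $B$ strictly contains $S$, some $v_i$ with $i\neq l-r$ does have a neighbour $x\notin S$ via a hyperedge $h$, and the candidate walk $v_1\phi(v_1)\cdots v_{l-r}\,P\,v_i\,h\,x$ is the right object. The gap is exactly the step you label ``delicate,'' and it is not a technicality. You must produce an injection $\phi'$ with $v\in\phi'(v)$ realizing this walk. Reassigning the $r+1$ vertices of $S$ to the $r$ hyperedges of $P$ together with $h$ works cleanly when $h\notin\operatorname{im}(\phi)$ or $h=\phi(v_j)$ (in the latter case the image set on $S$ is unchanged). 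But if $h=\phi(w)$ for some $w\notin S$, then $w$ is orphaned, and the only hyperedge freed up by your reassignment is $\phi(v_j)$, which need not contain $w$. Hall's condition from property (1) does not rescue you: it guarantees \emph{some} system of distinct representatives, not one containing the particular pair $(v_i,h)$ while agreeing with $\phi$ off $S\cup\{w\}$. Your two proposed outs both fail in part of the range: the ``direct short-cycle argument'' for $w=v_k$ on the path yields the Berge cycle $v_{k+1}\phi(v_{k+1})\cdots v_i\,h\,v_{k+1}$ of length $i-k$, which is only $\le r+1$ when $k\ge i-r-1$ (e.g.\ $k=l-r-1$, $i=l$) and hence is not forbidden; and when $w$ lies off $\P_\phi$ entirely there is no cycle to point to.

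A structural warning sign is that your argument never invokes property (2). The paper needs it precisely here: a single hyperedge $h$ meeting $S$ in two vertices and sticking out of $S$ closes the internal length-$r$ Berge path of Lemma \ref{r_path_between_any_pair} into a cycle of length only $r+1$, which is allowed when forbidding cycles of length $\ge r+2$, so the naive cycle argument cannot eliminate it; the paper disposes of it by using that $h$ is not a cut-hyperedge to find a second, hyperedge-disjoint route $\Q$ from $h\setminus S$ back into $S$ and only then builds a cycle of length $\ge r+2$. Your path-extension idea is a plausible substitute, but until the reassignment of $\phi$ is carried out in the case $h=\phi(w)$, $w\notin S$ (or the cut-hyperedge step is imported), the proof is incomplete.
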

\begin{proof}
Since the set $S = \{v_{l-r}, v_{l-r+1}, \ldots, v_{l-1}, v_l\}$ contains $r \ge 3$ hyperedges every pair $x, y \in S$ is contained in some hyperedge. Thus $\partial_2(\h[S]) = K_{r+1}$. Consider a (maximal) block $D$ of $\partial_2(\h)$ containing $S$. 

Suppose $D$ contains a vertex $t \not \in S$. Then since $D$ is 2-connected, there are two paths $P_1, P_2$ in $\partial_2(\h)$ between $t$ and $S$, which are vertex-disjoint besides $t$. Let $V(P_1) \cap S = \{u\}$ and $V(P_2) \cap S = \{v\}$. For each edge $xy \in E(P_1) \cup E(P_2)$, fix an arbitrary hyperedge $h_{xy}$ of $\h$ containing $xy$. It is easy to see that a subset of the hyperedges $\{h_{xy} \mid xy \in E(P_1) \cup E(P_2)\}$ forms a Berge path $\P$ between $u$ and $v$.

On the other hand, by Lemma \ref{r_path_between_any_pair}, there is a Berge path $\P'$ of length $r$ between $u$ and $v$ consisting of the $r$ hyperedges contained in $S$. Note that $\P$ and $\P'$ do not share any hyperedges (indeed, each hyperedge of $\P$ contains a vertex not in $S$, while hyperedges of $\P'$ are contained in $S$). Therefore, $\P \cup \P'$ forms a Berge cycle of length $r+2$ or longer unless $\P$ consists of only one hyperedge, say $h$. Note that $h$ contains a vertex $x \not \in S$ and $u, v \in h$; moreover by property (2), $h$ is not a cut-hyperedge of $\h$. So after deleting $h$ from $\h$, the hypergraph $\h \setminus \{h\}$ is still connected -- so there is a (shortest) Berge path $\Q$ in $\h \setminus \{h\}$ between $x$ and a vertex $s \in S$ (note that the hyperedges of $\Q$ are not contained in $S$). The vertex $s$ is different from either $u$ or $v$, say $s \not = u$ without loss of generality. By Lemma \ref{r_path_between_any_pair}, there is a Berge path $\Q'$ of length $r$ between $s$ and $u$ (consisting of hyperedges contained in $S$). Then, $\Q, \Q'$ and $h$ form a Berge cycle of length at least $r+2$, a contradiction. Therefore, $D$ contains no vertex outside $S$; thus $S$ induces a block of $\partial_2(\h)$, as required.
\end{proof}

Let $D_1, D_2, \ldots, D_p$ be the unique decomposition of $\partial_2(\h)$ into $2$-connected blocks. Claim \ref{finding_one_block} shows that one of these blocks, say $D_1$, is induced by $S$. Let us contract the vertices of $S$ to a single vertex, to produce a new hypergraph $\h'$. Then it is clear that the block decomposition of $\partial_2(\h')$ consists of the blocks $D_2, \ldots, D_p$. So $\h'$ does not contain any Berge cycle of length $r+2$ or longer, as well; moreover $|V(\h')| = |V(\h)|-r$. Thus, by induction, we have  $e(\h') \le \frac{r+1}{r}(|V(\h')|-1)$. Therefore, $$e(\h) \le \frac{r+1}{r}(|V(\h')|-1)+(r+1) = \frac{r+1}{r}(|V(\h)|-r-1)+(r+1) = \frac{r+1}{r}(|V(\h)|-1).$$

Now if $e(\h) =  \frac{r+1}{r}(|V(\h)|-1)$, then we must have $e(\h') = \frac{r+1}{r}(|V(\h')|-1)$ and $S$ must contain all $r+1$ subsets of size $r$ (i.e., $\h[S] = \h[D_1] = K_{r+1}^r$). Moreover, since equality holds for $\h'$, by induction, $\partial_2(\h')$ is connected and for each block $D_i$ (with $2 \le i \le p$) of $\partial_2(\h')$, $D_i = K_{r+1}$ and $\h'[D_i]= K_{r+1}^r$. This means that for every block $D$ of $\partial_2(\h)$, we have $D = K_{r+1}$ and $\h[D] = K_{r+1}^r$, completing the proof.

\section{Proof of Theorem \ref{r+1} ($k = r+1$)}
\label{sec:proofforr+1case}

The proof is similar to that of Theorem \ref{r+2} but there are many important differences. 

We use induction on $n$. For the base cases, notice that the statement of the theorem is trivially true if $1 \le n \le r$. Moreover, if $n = r+1$, then $e(\h) \le r$ because otherwise, $\h = K_{r+1}^r$ and then it is easy to see that there is a (Hamiltonian) Berge cycle of length $r+1$ in $\h$, a contradiction. Therefore, $e(\h) \le r= n-1$. Moreover, equality holds if and only if $\partial_2(\h) = K_{r+1}$ and $\h$ consists of $r$ hyperedges. 

We will show the statement is true for $n$ assuming it is true for all smaller values. Let $\h$ be an $r$-uniform hypergraph on $n$ vertices having no Berge cycle of length $r+1$ or longer. We show that we may assume the following two properties hold for $\h$.

\begin{enumerate}
\item[(1)] For any set $S \subseteq V(\h)$ with $|S| \le |V(\h)|-1 = n-1$, the number of hyperedges of $\h$ incident to the vertices of $S$ is at least $|S|$. 

Indeed, suppose there is a set $S \subset V(\h)$ (i.e., $|S| \le |V(\h)|-1$) with fewer than $|S|$ hyperedges incident to the vertices of $S$. We delete the vertices of $S$ from $\h$ to obtain a new hypergraph $\h'$ on $n-|S|$ vertices. By induction, $\h'$ contains at most $(n-|S|-1)$ hyperedges, so $\h$ contains less than $(n-1-|S|)+|S| = (n-1)$ hyperedges, as required.

%We consider two cases: If $n - |S| \ge 2$, then we delete the vertices of $S$ from $\h$ to obtain a new hypergraph $\h'$ on $n-|S|$ vertices. By induction, $\h'$ contains at most $(n-|S|-1)$ hyperedges, so $\h$ contains less than $(n-1-|S|)+|S| < (n-1)$ hyperedges, as desired. On the other hand, if $n - |S| = 1$, then since $r \ge 3$, all of the hyperedges of $\h$ are incident to $S$. So $e(\h) < |S| = n-1$, as required.

\item[(2)] There is no cut-hyperedge in $\h$.
%deal with small V1 later
%Casey, you may want to check the part below (this is effected by n \ge 1 change). I am not sure if there are other places to change... We need to go through the paper carefully -Abhi

Indeed, if $h \in E(\h)$ is a cut-hyperedge, then $\partial_2(\h \setminus \{h\})$ is not a connected graph, so there are disjoint non-empty sets $V_1$ and $V_2$ such that $V(\h) = V_1 \cup V_2$ and there are no edges of $\partial_2(\h \setminus \{h\})$ between $V_1$ and $V_2$. So the hypergraphs $\h[V_1]$ and $\h[V_2]$ do not contain a Berge cycle of length $r+1$ or longer. Therefore, by induction, $e(\h[V_1]) \le |V_1|-1$ and $e(\h[V_2]) \le |V_2|-1$. In total, $e(\h) = e(\h[V_1]) + e(\h[V_2]) + 1 \le (|V_1|+|V_2|-2)+1 = |V(\h)|-1$, as desired.

%Therefore, if $|V_1| \ge 2$ and $|V_2| \ge 2$, by induction, $e(\h[V_1]) \le |V_1|-1$ and $e(\h[V_2]) \le |V_2|-1$. In total, $e(\h) = e(\h[V_1]) + e(\h[V_2]) + 1 \le (|V_1|+|V_2|-2)+1 = |V(\h)|-1$, as desired. On the other hand, if $|V_1| = 1$, then $|V_2| = n-1 \ge 2$, then by induction, $e(\h[V_2]) \le |V_2|-1 = n-2$. In total, $e(\h) =  e(\h[V_2]) + 1 \le (n-2)+1 = n-1$, as desired. The case when $|V_2| = 1$ goes similarly. 

Moreover, we claim that the equality $e(\h) = |V(\h)|-1$ cannot hold in this case (i.e., if there is a cut-hyperedge). Indeed, if equality holds, then we must have $e(\h[V_1]) = |V_1|-1$ and $e(\h[V_2]) = |V_2|-1$. 
Notice that since $r \ge 3$, the hyperedge $h$ either contains at least two vertices $x, y \in V_1$ or two vertices $x, y \in V_2$. Without loss of generality, assume the former is true. By induction, $\partial_2(\h[V_1])$ is connected and for every block $D$ of $\partial_2(\h[V_1])$, we have $D = K_{r+1}$ and the subhypergraph induced by $D$ consists of $r$ hyperedges. So by Lemma \ref{r_path_between_any_pair}, there is a Berge path of length $r$ (consisting of the $r$ hyperedges induced by $D$) between any two vertices of a block $D$. Then it is easy to see that since $\partial_2(\h[V_1])$ is connected, there is a Berge path $\P$ of length at least $r$ between any two vertices of $V_1$, so in particular between $x$ and $y$. Then $\P$ together with $h$ forms a Berge cycle of length $r+1$ in $\h$, a contradiction.
\end{enumerate}

Consider an auxiliary bipartite graph $B$ consisting of vertices of $\h$ in one class and hyperedges of $\h$ on the other class. Then property (1) shows that Hall's condition holds for all subsets of $V(\h)$ of size up to $|V(\h)|-1$. Therefore, there is a matching in $B$ that matches all the vertices in $V(\h)$, except at most one vertex, say $x$. In other words, there exists an injection $f : V(\h)\setminus \{x\} \to E(\h)$ such that for every $v \in V(\h) \setminus \{x\}$, we have $v \in f(v)$. Given an injection $f : V(\h) \setminus \{x\} \to E(\h)$ with $v \in f(v)$, let $\P_f$ be a longest Berge path of the form $v_1f(v_1)v_2f(v_2) \ldots v_{l-1}f(v_{l-1})v_l$  where for each $1 \le i \le l-1$, $v_{i+1} \in f(v_i)$. Moreover, among all injections $f : V(\h) \setminus \{x\} \to E(\h)$ with $v \in f(v)$, suppose $\phi : V(\h) \setminus \{x\} \to E(\h)$ is an injection for which the path $\P_{\phi} = v_1\phi(v_1)v_2\phi(v_2) \ldots v_{l-1}\phi(v_{l-1})v_l$ is a longest path. 

Because of the way $\P_{\phi}$ was constructed, it is also clear that $x \not \in \{v_1, v_2, \ldots, v_{l-1} \}$. We consider two cases depending on whether $v_l$ is equal to $x$ or not. 

\vspace{2mm}

\textbf{Case 1: $v_l \not = x$.} Our aim is to get a contradiction, and show that this case is impossible.

\begin{claim}
\label{last_edge}
If $v_l \not = x$, then $\phi(v_l) = \{v_{l-r+1}, v_{l-r+2}, \ldots, v_l\}$.
\end{claim}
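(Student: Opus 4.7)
The plan is to mimic the proof of Claim \ref{r+1_set} from the $k=r+2$ case, with the forbidden cycle length decreased from $r+2$ to $r+1$. This shrinks the admissible tail of the path from $r+1$ vertices to $r$ vertices, so the containment $\phi(v_l) \subseteq \{v_{l-r+1}, \ldots, v_l\}$ becomes an equality by a counting argument on the size of $\phi(v_l)$. The assumption $v_l \neq x$ is used in two places: it guarantees that $\phi(v_l)$ is defined and that $v_l \in \phi(v_l)$, so $\phi(v_l)$ may legitimately play the role of the final hyperedge of a $\P_\phi$-type path.

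First I would rule out that $\phi(v_l)$ meets the ``early'' part of the path. Suppose $v_i \in \phi(v_l)$ for some $i \le l-r$. Then together with the pair $\{v_l,v_i\} \subseteq \phi(v_l)$ and the pairs $\{v_j,v_{j+1}\} \subseteq \phi(v_j)$ for $i \le j \le l-1$, the hyperedges $\phi(v_i),\phi(v_{i+1}),\ldots,\phi(v_l)$ form a Berge cycle of length $l-i+1 \ge r+1$, contradicting the hypothesis on $\h$. Next I would rule out that $\phi(v_l)$ contains any vertex outside $\{v_1,\ldots,v_l\}$. If $v \in \phi(v_l)\setminus\{v_1,\ldots,v_l\}$, then $v_1\phi(v_1)v_2\phi(v_2)\ldots v_l\phi(v_l)v$ is a $\P_\phi$-type path: the hyperedge $\phi(v_l)$ is new because $\phi$ is an injection and $v_l \not\in\{v_1,\ldots,v_{l-1}\}$, while the vertex $v$ is new by assumption. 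This contradicts the maximality of $\P_\phi$ over all paths of this form.

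Combining these two steps, every vertex of $\phi(v_l)$ lies in $\{v_{l-r+1},\ldots,v_{l-1},v_l\}$. Since $|\phi(v_l)|=r$ equals the size of this tail set, we conclude $\phi(v_l)=\{v_{l-r+1},\ldots,v_l\}$. I do not expect any real obstacle here: both sub-steps are direct adaptations of the earlier Claim \ref{r+1_set}, and the only small point to check is that the special vertex $x$ (the unique possibly unmatched vertex from property (1)) does not interfere, which follows immediately from the case hypothesis $v_l\neq x$ together with the observation, already noted in the excerpt, that $x\notin\{v_1,\ldots,v_{l-1}\}$.
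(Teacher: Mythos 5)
Your proposal is correct and follows essentially the same argument as the paper: rule out vertices $v_i$ with $i\le l-r$ via a Berge cycle of length at least $r+1$, rule out vertices outside $\{v_1,\ldots,v_l\}$ via an extension of $\P_\phi$, and conclude equality since $|\phi(v_l)|=r$ matches the size of the tail set. Your extra remarks on why $v_l\neq x$ is needed and why the extension is a legitimate $\P_\phi$-type path are accurate and consistent with the paper.
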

\begin{proof}
If $v_l \not = x$, then we claim $\phi(v_l) = \{v_{l-r+1}, v_{l-r+2}, \ldots, v_l\}$. Indeed, if $\phi(v_l)$ contains a vertex $v_i \in \{v_1, v_2, \ldots, v_{l-r}\}$, then the Berge cycle $v_i\phi(v_i)v_{i+1}\phi(v_{i+1}) \ldots v_l \phi(v_l) v_i$ is of length $r+1$ or longer, a contradiction. Moreover, if $\phi(v_l)$ contains a vertex $v \not \in \{v_1, v_2, \ldots, v_l\}$, then $P_{\phi}$ can be extended to a longer path $v_1\phi(v_1)v_2\phi(v_2), \ldots, v_{l-1}\phi(v_{l-1})v_l\phi(v_l)v$, a contradiction again, proving that $\phi(v_l) = \{v_{l-r+1}, v_{l-r+2}, \ldots, v_l\}$.
\end{proof}

Fix some $i \in \{l-r+1, l-r+2, \ldots, l-1\}$. Let us define a new injection $\psi: V(\h) \setminus \{x\} \to E(\h)$ as follows: $\psi(v) = \phi(v)$ for every $v \not \in \{x, v_1,v_2, \ldots, v_l\}$, and for every $v \in \{v_1, v_2, \ldots, v_{i-1}\}$. Moreover, let $\psi(v_i) = \phi(v_l)$ and $\psi(v_k) = \phi(v_{k-1})$ for each $l \ge k \ge i+1$.  Now consider the Berge path $v_1\phi(v_1)v_2\phi(v_2)$ $\ldots$ $v_i\phi(v_l)v_l\phi(v_{l-1}) \ldots v_{i+2}\phi(v_{i+1})v_{i+1}$ $= v_1\psi(v_1)v_2\psi(v_2)$ $\ldots$ $v_i\psi(v_i)v_l\psi(v_{l}) \ldots v_{i+2}\psi(v_{i+2})v_{i+1}$. This path has the same length as $\P_{\phi}$, so it is also a longest path. Moreover, $v_{i+1} \not = x$, so we can apply Claim \ref{last_edge} to conclude that $\psi(v_{i+1}) = \{v_{l-r+1}, v_{l-r+2}, \ldots, v_l\} = \phi(v_i)$. But then $\phi(v_i) = \phi(v_l)$, a contradiction to the fact that $\phi$ was an injection. 

\vspace{2mm}

\textbf{Case 2: $v_l = x$.} 

\begin{claim}
\label{r+1setagain}
$\phi(v_{l-1}) \subset \{v_{l-r}, v_{l-r+1}, \ldots, v_l\}.$ 
\end{claim}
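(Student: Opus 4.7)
The plan is to mirror the proof of Claim~\ref{r+1_set} from the $k=r+2$ case, adapted to the fact that in the current Case~2 we have $v_l = x$, so $\phi(v_l)$ is not defined. The relevant edge is instead $\phi(v_{l-1})$, the final hyperedge of $\P_\phi$. First I would observe that $v_{l-1} \in \phi(v_{l-1})$ (by the defining property $v \in \phi(v)$ of the injection) and $v_l \in \phi(v_{l-1})$ (since $\phi(v_{l-1})$ is the last hyperedge of $\P_\phi$), so the claim reduces to showing that every remaining vertex of $\phi(v_{l-1})$ lies in $\{v_{l-r}, v_{l-r+1}, \ldots, v_{l-2}\}$.

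Suppose for contradiction that some $u \in \phi(v_{l-1})$ lies outside $\{v_{l-r}, \ldots, v_l\}$. I would split into two sub-cases. If $u = v_i$ for some $i$ with $1 \le i \le l-r-1$, then the hyperedges $\phi(v_i), \phi(v_{i+1}), \ldots, \phi(v_{l-1})$ together with the vertices $v_i, v_{i+1}, \ldots, v_{l-1}$ form a Berge cycle of length $l-i \ge r+1$, the closing edge being $\phi(v_{l-1})$ (which contains $u = v_i$). This contradicts the assumption that $\h$ has no Berge cycle of length $r+1$ or longer.

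Otherwise $u \notin \{v_1, \ldots, v_l\}$. In this case the sequence $P' := v_1 \phi(v_1) v_2 \phi(v_2) \ldots v_{l-1} \phi(v_{l-1}) u$, obtained by simply replacing the final vertex $v_l$ of $\P_\phi$ with $u$, is a valid Berge path of the required form for the same injection $\phi$ and has the same length $l-1$. Hence $P'$ is also a longest path of the form $\P_f$ (over all admissible $f$), but it ends at $u \ne x$. Taking $P'$ in the role of $\P_\phi$ places us in the Case~1 situation, whose argument produces two distinct vertices mapped to the same hyperedge by $\phi$, contradicting the injectivity of $\phi$.

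The step I expect to be the main obstacle is the reduction in the second sub-case: I need to be precise that the re-chosen path $P'$ really is a longest path of the prescribed form (which it is, since $u \in \phi(v_{l-1})$ ensures $P'$ is a legitimate Berge path of the form $v_1 \phi(v_1) \ldots v_{l-1} \phi(v_{l-1}) v_l'$ with $v_l' = u$), and that invoking the Case~1 argument verbatim on $P'$ is legitimate because only the length maximality and the endpoint being different from $x$ were used there. Once this framing is in place, the contradiction is immediate and the claim follows.
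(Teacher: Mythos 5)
Your proposal is correct and follows essentially the same route as the paper's own proof: rule out a vertex $v_i$ with $i \le l-r-1$ via the Berge cycle $v_i\phi(v_i)\ldots v_{l-1}\phi(v_{l-1})v_i$ of length at least $r+1$, and rule out a vertex outside $\{v_1,\ldots,v_l\}$ by replacing $v_l$ with that vertex to get an equally long path ending at a vertex different from $x$, which is then handled by the Case 1 argument. The care you take in justifying that the re-routed path is a longest path of the prescribed form and that Case 1 applies to it is exactly the (implicit) justification in the paper.
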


\begin{proof}
If $\phi(v_{l-1})$ contains a vertex $v \not \in \{v_1, v_2, \ldots, v_l \}$, then we consider the Berge path $v_1\phi(v_1)v_2\phi(v_2)$$,\ldots,$$ v_{l-1}\phi(v_{l-1})v$. Since $v \not = x$, we get a contradiction by Case 1. Moreover, if $\phi(v_{l-1})$ contains a vertex $v_i$ with $i \in \{1,2, \ldots, l-r-1\}$, then the Berge cycle $v_i \phi(v_i) v_{i+1} \phi(v_{i+1}) \ldots v_{l-1} \phi(v_{l-1}) v_i$ is of length $r+1$ or longer, a contradiction. This finishes the proof of the claim. 
\end{proof}

By Claim \ref{r+1setagain}, we know that $\phi(v_{l-1}) = \{v_{l-r}, v_{l-r+1}, \ldots, v_{l-1}, v_l\} \setminus \{v_j\}$ for some $j$ with $l-r \le j \le l-2$. (From now, in the rest of the proof we fix this $j$.)

\begin{claim}
\label{finding_many_sets}
For any $i \in \{l-r, l-r+1, \ldots, l-1\} \setminus \{j\}$, we have $\phi(v_i) \subset \{v_{l-r}, v_{l-r+1}, \ldots, v_{l-1}, v_l\}$.
\end{claim}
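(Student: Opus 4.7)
The plan is to argue along the two-subcase template used in the proof of Claim \ref{r+1setagain}, but now applied to a rerouted longest path in the style of the proof of Claim \ref{finding_many_r-sets}. The case $i = l-1$ is already given by Claim \ref{r+1setagain}, so we may assume $i \in \{l-r, \ldots, l-2\} \setminus \{j\}$. Since $\phi(v_{l-1}) = \{v_{l-r}, \ldots, v_l\} \setminus \{v_j\}$ and $i \ne j$, the hyperedge $\phi(v_{l-1})$ contains both $v_i$ and $v_{l-1}$, so it can be used as a ``jump'' hyperedge from $v_i$ directly to $v_{l-1}$, bypassing $v_{i+1}, \ldots, v_{l-2}$.

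The key building block is the Berge walk
\[
W \;=\; v_1, \phi(v_1), v_2, \ldots, v_i, \phi(v_{l-1}), v_{l-1}, \phi(v_{l-2}), v_{l-2}, \ldots, v_{i+2}, \phi(v_{i+1}), v_{i+1}
\]
of length $l - 2$; its hyperedges are pairwise distinct by injectivity of $\phi$, and crucially $W$ uses every hyperedge in $\{\phi(v_1), \ldots, \phi(v_{l-1})\}$ except $\phi(v_i)$. Suppose for contradiction that $\phi(v_i)$ contains a vertex $w \notin \{v_{l-r}, \ldots, v_l\}$, and split on the location of $w$.

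In the first subcase, $w \notin \{v_1, \ldots, v_l\}$. Extend $W$ by the hyperedge $\phi(v_i)$ (which contains $v_{i+1}$ and $w$) to obtain a Berge path of length $l - 1$. The natural injection $\psi$ realizing this path as $\P_\psi$ has the same image set as $\phi$, hence is itself an injection. Since $\P_\psi$ has the maximum length $l - 1$ and ends at $w \ne x$, applying the reasoning of Case 1 to $(\psi, \P_\psi)$ yields a contradiction. In the second subcase, $w = v_s$ with $s \in \{1, \ldots, l - r - 1\}$; then closing $W$ into a cycle via $\phi(v_i)$ (which contains $v_{i+1}$ and $v_s$) produces the Berge cycle $v_s, v_{s+1}, \ldots, v_i, v_{l-1}, v_{l-2}, \ldots, v_{i+1}, v_s$ of length $l - s \ge r + 1$, contradicting the hypothesis that $\h$ has no Berge cycle of length $r+1$ or longer.

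The only delicate point is invoking the Case 1 impossibility for the injection $\psi$ rather than $\phi$; this is legitimate because the Case 1 argument uses only that the underlying path is a longest Berge path of the specified form whose last vertex is not $x$, and both properties hold for $(\psi, \P_\psi)$.
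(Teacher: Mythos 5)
Your proof is correct and follows essentially the same route as the paper: you build the same rerouted longest path (the paper encodes it via an explicitly redefined injection $\psi$ with $\psi(v_{i+1})=\phi(v_i)$, you via the walk $W$), and then handle the two subcases identically — extending to an equally long path ending at a vertex $\ne x$ and invoking the Case 1 contradiction, or closing a Berge cycle of length $l-s\ge r+1$ through $\phi(v_i)$. Your closing remark about why Case 1 applies to the new injection is exactly the justification implicit in the paper.
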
 
\begin{proof}
When $i =l-1$, we know the statement is true by Claim \ref{r+1setagain}. 

Suppose $i \in \{l-r, l-r+1, \ldots, l-2\} \setminus \{j\}$. Let us define a new injection $\psi: V(\h) \setminus \{x\} \to E(\h)$ as follows: $\psi(v) = \phi(v)$ for every $v \not \in \{v_1,v_2, \ldots, v_l\}$, and for every $v \in \{v_1, v_2, \ldots, v_{i-1}\}$. Moreover, let $\psi(v_i) = \phi(v_{l-1})$ and $\psi(v_k) = \phi(v_{k-1})$ for each $l-1 \ge k \ge i+1$.  Now consider the Berge path $v_1\phi(v_1)v_2\phi(v_2)$ $\ldots$ $v_i\phi(v_{l-1})v_{l-1}\phi(v_{l-2})$$ \ldots $$v_{i+1}$ $= v_1\psi(v_1)v_2\psi(v_2)$ $\ldots$ $v_i\psi(v_i)v_{l-1}\psi(v_{l-1}) \ldots v_{i+1}$. (Note that when $i = l-2$, the Berge path is simply $v_1\phi(v_1)v_2\phi(v_2)$ $\ldots$ $v_i\phi(v_{l-1})v_{l-1} = v_1\psi(v_1)v_2\psi(v_2) \ldots v_i\psi(v_i)v_{l-1}$.)

If $\psi(v_{i+1})$ contains a vertex $v \not \in \{v_1, v_2, \ldots, v_l \}$, then the Berge path $v_1\psi(v_1)v_2\psi(v_2)$ $\ldots$ $v_i\psi(v_i)v_{l-1}\psi(v_{l-1}) \ldots v_{i+2}\psi(v_{i+2})v_{i+1}\psi(v_{i+1})v$ has the same length as $\P_{\phi}$, so it is also a longest path. Moreover, since $v \not = x$, we get a contradiction by Case 1. 

If $\psi(v_{i+1})$ contains a vertex $v_k \in \{v_1, v_2, \ldots, v_{l-r-1}\}$ then one can see that the Berge cycle $v_k \psi(v_k) v_{k+1} \psi(v_{k+1})$ $\ldots$ $v_{l-1} \psi(v_{l-1}) v_k$ is of length $r+1$ or longer, a contradiction. Therefore, we have $\psi(v_{i+1}) \subset \{v_{l-r}, v_{l-r+1}, \ldots, v_l\}$. But we defined $\psi(v_{i+1}) = \phi(v_i)$, proving the claim. 
\end{proof}

Note that Claim \ref{finding_many_sets} shows that $r-1$ hyperedges of $\h$ are contained in a set $S := \{v_{l-r}, v_{l-r+1},\ldots, v_{l-1}, v_l\}$ of size $r+1$. The following claim shows that if we can find one more hyperedge of $\h$ contained in $S$, then $S$ must induce a block of $\partial_2(\h)$. 

\begin{claim}
\label{findingoneblock}
Suppose $r \ge 3$. If a set $S$ of size $r+1$ contains $r$ hyperedges of $\h$ then it induces a induces a block of $\partial_2(\h)$.
\end{claim}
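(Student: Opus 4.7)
The plan is to show that the block $D$ of $\partial_2(\h)$ containing $\partial_2(\h[S])$ has vertex set equal to $S$. Since $S$ has $r+1$ vertices and carries $r \ge 3$ hyperedges of size $r$, the same counting as in Lemma \ref{r_path_between_any_pair} forces $\partial_2(\h[S]) = K_{r+1}$; as $r \ge 3$, this graph is 2-connected, so some block $D$ of $\partial_2(\h)$ contains $S$. I would then argue by contradiction, assuming $V(D) \supsetneq S$ and building a Berge cycle of length at least $r+1$ in $\h$, contradicting the hypothesis.

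The cycle will combine a Berge path inside $S$ with a Berge path that leaves $S$. Pick $t \in V(D) \setminus S$. Applying the fan lemma to the 2-connected graph $D$ gives two internally disjoint paths in $\partial_2(\h)$ from $t$ to $S$; shortening each at its first $S$-vertex yields paths $P_1, P_2$ ending at distinct vertices $u, v \in S$ with all internal vertices outside $S$. Their concatenation is a path $P$ from $u$ to $v$ in $\partial_2(\h)$ through $t$ whose internal vertices all lie outside $S$, so every edge of $P$ has an endpoint outside $S$, and any hyperedge of $\h$ realizing such an edge is therefore not contained in $S$.

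Following the construction in the proof of Claim \ref{finding_one_block}, I would pick one realizing hyperedge per edge of $P$ and then iteratively shortcut the resulting Berge walk at any repeated hyperedge (a doubly used hyperedge contains all four endpoints of the two edges it realizes, giving an immediate shortcut) and at any repeated vertex. This yields a genuine Berge path $\P$ from $u$ to $v$ of length $s \ge 1$ whose hyperedges are distinct and all not contained in $S$. Lemma \ref{r_path_between_any_pair} simultaneously supplies a Berge path $\P'$ of length exactly $r$ from $u$ to $v$ using the $r$ hyperedges contained in $S$. The hyperedge sets of $\P$ and $\P'$ are disjoint, and since internal vertices of $\P$ remain outside $S$ throughout the shortcutting, their vertex sets meet only at $\{u,v\}$. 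Hence $\P \cup \P'$ is a Berge cycle of length $r+s \ge r+1$, yielding the desired contradiction.

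The main obstacle I anticipate is precisely ensuring that $\P$ does not wander back into $S \setminus \{u,v\}$ internally during this extraction, since such a visit would spoil the vertex-disjointness needed for $\P \cup \P'$ to be a Berge cycle of the claimed length. The resolution is that each shortcut operation, whether triggered by a repeated hyperedge or a repeated vertex, only deletes terms from the walk's vertex sequence; so the property that all internal vertices lie outside $S$, which holds for the initial walk given by $P$, is preserved throughout.
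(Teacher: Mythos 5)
Your proposal is correct and follows essentially the same route as the paper: establish $\partial_2(\h[S])=K_{r+1}$, take the block $D\supseteq S$, use two internally disjoint paths from a hypothetical $t\in V(D)\setminus S$ to distinct $u,v\in S$ to build a Berge path $\P$ avoiding the hyperedges inside $S$, and close it up with the length-$r$ Berge path from Lemma \ref{r_path_between_any_pair} to get a forbidden Berge cycle of length at least $r+1$. Your extra care in extracting a genuine Berge path from the walk of realizing hyperedges just makes explicit what the paper dismisses as ``easy to see.''
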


\begin{proof}
Since the set $S$ contains at least $3$ hyperedges every pair $x, y \in S$ is contained in some hyperedge. Thus $\partial_2(\h[S]) = K_{r+1}$. Consider a (maximal) block $D$ of $\partial_2(\h)$ containing $S$. 

Suppose $D$ contains a vertex $t \not \in S$. Then since $D$ is 2-connected, there are two paths $P_1, P_2$ in $\partial_2(\h)$ between $t$ and $S$, which are vertex-disjoint besides $t$. Let $V(P_1) \cap S = \{u\}$ and $V(P_2) \cap S = \{v\}$. For each edge $xy \in E(P_1) \cup E(P_2)$, fix an arbitrary hyperedge $h_{xy}$ of $\h$ containing $xy$. It is easy to see that a subset of the hyperedges $\{h_{xy} \mid xy \in E(P_1) \cup E(P_2)\}$ forms a Berge path $\P$ between $u$ and $v$.

On the other hand, by Lemma \ref{r_path_between_any_pair}, there is a Berge path $\P'$ of length $r$ between $u$ and $v$ consisting of the $r$ hyperedges contained in $S$. Note that $\P$ and $\P'$ do not share any hyperedges (indeed, each hyperedge of $\P$ contains a vertex not in $S$, while hyperedges of $\P'$ are contained in $S$). Therefore, $\P$ together with $\P'$ forms a Berge cycle of length $r+1$ or longer, a contradiction. Therefore, $D$ contains no vertex outside $S$; thus $S$ induces a block of $\partial_2(\h)$, as required.
\end{proof}

We will use the above claim several times later. At this point we need to distinguish the cases $r =3$ and $r \ge 4$, since Lemma \ref{r-1pathbetweenanypair} only applies in the latter case. 

\subsection*{The case $r \ge 4$}
Since $r \ge 4$, by Claim \ref{finding_many_sets} and Lemma \ref{r-1pathbetweenanypair} there is a Berge path of length $r-1$ between any two vertices of $S = \{v_{l-r}, v_{l-r+1},\ldots, v_{l-1}, v_l\}$. This will allow us to show the following.
%we need to deal with case r = 3.
\begin{claim}
\label{onemoresetinS}
$\phi(v_j) \subset \{v_{l-r}, v_{l-r+1}, \ldots, v_{l-1}, v_l \} = S$
\end{claim}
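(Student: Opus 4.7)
The plan is to argue by contradiction: suppose some vertex $w \in \phi(v_j)$ lies outside $S$. By Claim \ref{finding_many_sets}, the $r-1$ hyperedges $\phi(v_i)$ with $i \in \{l-r,\ldots,l-1\}\setminus\{j\}$ all lie in $S$, so Lemma \ref{r-1pathbetweenanypair} (which applies because $r \ge 4$) supplies a Berge path of length $r-1$ between any two vertices of $S$ using exactly these hyperedges. Inspection of the lemma's proof shows that such a path visits $r$ of the $r+1$ vertices of $S$, and the omitted vertex may be chosen to be any element of $S$ other than the two endpoints. I would apply this with endpoints $v_{l-r}$ and $v_j$ to obtain a Berge path $P = u_0 h_1 u_1 h_2 \ldots h_{r-1} u_{r-1}$ (with $u_0 = v_{l-r}$ and $u_{r-1} = v_j$) that skips $x = v_l$; this is possible since $v_{l-r} \ne v_l$ and $v_j \ne v_l$ (as $j \le l-2$), and yields $\{u_0,\ldots,u_{r-1}\} = \{v_{l-r},\ldots,v_{l-1}\}$.

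I would then split according to where $w$ lies. If $w = v_k$ for some $k \in \{1,\ldots,l-r-1\}$, concatenating the prefix $v_k \phi(v_k) \ldots v_{l-r-1} \phi(v_{l-r-1}) v_{l-r}$ of $\P_\phi$, the path $P$, and the edge $\phi(v_j)$ (which contains both $u_{r-1} = v_j$ and $v_k = w$) produces a Berge cycle of length $(l-r-k) + (r-1) + 1 = l-k \ge r+1$. Its vertices are distinct since $v_k,\ldots,v_{l-r-1}$ lie outside $S$ while $u_0,\ldots,u_{r-1}$ lie in $S$, and its edges are distinct because $\phi$ is injective; this contradicts the hypothesis on $\h$.

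If instead $w \not\in \{v_1,\ldots,v_l\}$, the analogous construction yields a Berge path of length $l-1$ ending at $w$. I would define a new injection $\psi$ by $\psi(v) = \phi(v)$ off $\{v_{l-r},\ldots,v_{l-1}\}$, $\psi(u_i) = h_{i+1}$ for $0 \le i \le r-2$, and $\psi(v_j) = \phi(v_j)$. Because $\{u_0,\ldots,u_{r-1}\} = \{v_{l-r},\ldots,v_{l-1}\}$, this reassignment merely permutes the images $\phi(v_{l-r}),\ldots,\phi(v_{l-1})$ among themselves, so $\psi$ is a well-defined injection $V(\h)\setminus\{x\}\to E(\h)$ with $v \in \psi(v)$. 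The constructed path is of the form $v_1 \psi(v_1) \ldots$, has length $l-1$, and ends at $w \ne x$; since $\phi$ was chosen extremal, $|\P_\psi| = l-1$, so this path is a valid choice of $\P_\psi$, and re-running the argument of Case 1 with $\psi$ in place of $\phi$ yields a contradiction to the injectivity of $\psi$. The step requiring the most care is the insistence that $P$ skip $x$: this is exactly what makes the reassignment a well-defined injection on $V(\h)\setminus\{x\}$, and it is here that the hypothesis $r \ge 4$ (via Lemma \ref{r-1pathbetweenanypair}) becomes essential.
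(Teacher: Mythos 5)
Your proof is correct, but it is not the route the paper takes. The paper handles this claim with property (2): assuming $\phi(v_j)$ contains a vertex $v\notin S$, it notes $\phi(v_j)\supseteq\{v_j,v_{j+1}\}$ and is not a cut-hyperedge, so $\h\setminus\{\phi(v_j)\}$ is connected and contains a shortest Berge path $\Q$ from $v$ to some $s\in S$ whose hyperedges are neither contained in $S$ nor equal to $\phi(v_j)$; concatenating $\Q$, a length-$(r-1)$ Berge path inside $S$ from $s$ to $v_j$ given by Lemma~\ref{r-1pathbetweenanypair}, and $\phi(v_j)$ itself closes a Berge cycle of length at least $r+1$. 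You instead stay entirely within the longest-path/rotation framework of Claims~\ref{last_edge} and~\ref{finding_many_sets}: when the outside vertex is an earlier path vertex $v_k$ you close a cycle of length $l-k\ge r+1$ directly from the prefix of $\P_\phi$, a Berge path through $S$, and $\phi(v_j)$; when it is a brand-new vertex you rewire the injection along a Berge path through $S\setminus\{x\}$ and hand the resulting maximum-length path ending at a non-$x$ vertex back to Case 1 -- the same move the paper makes inside the proof of Claim~\ref{finding_many_sets}, and legitimate here because your new path attains the global maximum length $l-1$, so $\psi$ is itself an extremal injection. The trade-off: your argument does not invoke property (2) at all for this claim, but it uses slightly more than the statement of Lemma~\ref{r-1pathbetweenanypair}, namely that the vertex of $S$ omitted by the length-$(r-1)$ path can be prescribed; that strengthening is indeed immediate from the lemma's proof (the shadow of the $r-1$ hyperedges is complete on $S$, and the Hall argument applies to any sequence of $r$ distinct vertices joining the two endpoints), and, as you correctly emphasize, prescribing the omitted vertex to be $x=v_l$ is exactly what makes $\psi$ a well-defined injection on $V(\h)\setminus\{x\}$.
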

\begin{proof}
Suppose for a contradiction that $\phi(v_j)$ contains a vertex $v \not \in S$. The hyperedge $\phi(v_j)$ contains at least two vertices from $S$, namely $v_j$ and $v_{j+1}$. By property (2), $\phi(v_j)$ is not a cut-hyperedge of $\h$. So after deleting $\phi(v_j)$ from $\h$, the hypergraph $\h \setminus \{\phi(v_j)\}$ is still connected -- so there is a (shortest) Berge path $\Q$ in $\h \setminus \{\phi(v_j)\}$ between $v$ and a vertex $s \in S$ (note that the hyperedges of $\Q$ are not contained in $S$). The vertex $s$ is different from either $v_j$ or $v_{j+1}$, say $s \not = v_j$, without loss of generality. By Lemma \ref{r-1pathbetweenanypair}, there is a Berge path $\Q'$ of length $r-1$ between $s$ and $v_j$ (consisting of the hyperedges contained in $S$). Then $\Q, \Q'$ and $\phi(v_j)$ form a Berge cycle of length at least $r+1$ in $\h$, a contradiction. 
\end{proof}

Claim \ref{finding_many_sets} and Claim \ref{onemoresetinS} together show that there are at least $r$ hyperedges of $\h$ contained in $S$. If all $r+1$ subsets of $S$ of size $r$ are hyperedges of $\h$, then $S$ induces $K_{r+1}^r$ and it is easy to show that it contains a Berge cycle of length $r+1$, a contradiction. This means $S$ contains exactly $r$ hyperedges of $\h$. Then by Claim \ref{findingoneblock}, we know that $S$ induces a block of $\partial_2(\h)$. 
% \begin{claim}
% The set $S = \{v_{l-r}, v_{l-r+1}, \ldots, v_{l-1}, v_l \}$ induces a block of $\partial_2(\h)$.
% \end{claim}
% \begin{proof}
% Since the set $S = \{v_{l-r}, v_{l-r+1}, \ldots, v_{l-1}, v_l\}$ contains at least $3$ hyperedges every pair $x, y \in S$ is contained in some hyperedge. Thus $\partial_2(\h[S]) = K_{r+1}$. Consider a (maximal) block $D$ of $\partial_2(\h)$ containing $S$. 

% Suppose $D$ contains a vertex $t \not \in S$. Then since $D$ is 2-connected, there are two paths $P_1, P_2$ in $\partial_2(\h)$ between $t$ and $S$, which are vertex-disjoint besides $t$. Let $V(P_1) \cap S = \{u\}$ and $V(P_2) \cap S = \{v\}$. For each edge $xy \in E(P_1) \cup E(P_2)$, fix an arbitrary hyperedge $h_{xy}$ of $\h$ containing $xy$. It is easy to see that a subset of the hyperedges $\{h_{xy} \mid xy \in E(P_1) \cup E(P_2)\}$ forms a Berge path $\P$ between $u$ and $v$.

% On the other hand, by Lemma \ref{r_path_between_any_pair}, there is a Berge path $\P'$ of length $r$ between $u$ and $v$ consisting of the $r$ hyperedges contained in $S$. Note that $\P$ and $\P'$ do not share any hyperedges (indeed, each hyperedge of $\P$ contains a vertex not in $S$, while hyperedges of $\P'$ are contained in $S$). Therefore, $\P$ together with $\P'$ forms a Berge cycle of length $r+1$ or longer, a contradiction. Therefore, $D$ contains no vertex outside $S$; thus $S$ induces a block of $\partial_2(\h)$, as required.
% \end{proof}

Let $D_1, D_2, \ldots, D_p$ be the unique decomposition of $\partial_2(\h)$ into $2$-connected blocks. Claim \ref{findingoneblock} shows that one of these blocks, say $D_1$, is induced by $S$. Let us contract the vertices of $S$ to a single vertex, to produce a new hypergraph $\h'$. Then it is clear that the block decomposition of $\partial_2(\h')$ consists of the blocks $D_2, \ldots, D_p$. So $\h'$ does not contain any Berge cycle of length $r+1$ or longer, as well; moreover, $|V(\h')| = |V(\h)|-r$ and $e(\h') = e(\h)-r$. By induction, we have  $e(\h') \le |V(\h')|-1$. Therefore, $$e(\h) = e(\h') + r \le (|V(\h')|-1) +r = (|V(\h)|-r-1)+r = |V(\h)|-1.$$

If $e(\h) =  |V(\h)|-1$, then we must have $e(\h') = |V(\h')|-1$ and $S$ must contain exactly $r$ hyperedges. Moreover, since equality holds for $\h'$, by induction, $\partial_2(\h')$ is connected and for each block $D_i$ (with $2 \le i \le p$) of $\partial_2(\h')$, $D_i = K_{r+1}$ and $\h'[D_i]$ contains exactly $r$ hyperedges. This means that for every block $D$ of $\partial_2(\h)$, we have $D = K_{r+1}$ and $\h[D]$ contains exactly $r$ hyperedges, completing the proof in the case $r\ge 4$.

\subsection*{The case $r = 3$}

	Recall that using Claim \ref{finding_many_sets} we can find a set $S$ of size $4$ which contains $2$ hyperedges of $\h$. 
%If there are $3$ or more hyperedges of $\h$ in $S$ then by Lemma \ref{r_path_between_any_pair} there is a Berge path of length 3 between any two vertices of $S$. Then by using the same argument as in the proof of Claim \ref{findingoneblock}, we can conclude that $S$ induces a block of $\partial_2(\h)$. 
Let $S = \{x, y, a, b\}$ and the two hyperedges be $xab$ and $yab$. By property (2), $xab$ is not a cut-hyperedge of $\h$.  So after deleting $xab$ from $\h$, the hypergraph $\h \setminus \{xab\}$ is still connected -- so there is a (shortest) Berge path $\mathcal Q$ between $x$ and $\{y,a,b\}$. If $\mathcal Q$ is of length at least $2$, then it is easy to see that $\mathcal Q$ together with $yab$ and $xab$ form a Berge cycle of length at least $4$, a contradiction. So $\mathcal Q$ consists of only one hyperedge, say $h$. 

Our goal is to find a set of vertices which induces a block of $\partial_2(\h)$, so that we can apply induction.

\vspace{2mm}

If $\abs{h \cap \{y,a,b\}} = 2$ then $h, xab, yab$ are $3$ hyperedges of $\h$ contained in $S$, so by Claim \ref{findingoneblock}, we can conclude that $S$ induces a block of $\partial_2(\h)$. (Notice that $S$ contains exactly $\abs{S}-1 = 3$ hyperedges of $\h$, otherwise it is easy to find a Berge cycle of length 4; this will be useful later.) So we can suppose $\abs{h \cap \{y,a,b\}} = 1$. We consider two cases depending on whether $h$ is either $xat$ or $xbt$, or whether $h$ is $xyt$ for some $t \not \in S$.

\vspace{2mm}
    
    \textbf{Case 1.} First suppose without loss of generality that $h = xat$ for some $t \not \in S$. Consider the set $\mathcal D$ of all hyperedges of $\h$ containing the pairs $xa$, $ab$ or $xb$ and let $D$ be the set of vertices spanned by them. For each pair of vertices $i, j \in \{x,a,b\}$, let $V_{ij} = \{v \mid ijv \in \h \} \setminus \{x,a,b\}$. We claim that the sets $V_{xa}, V_{ab}, V_{xb}$ are pairwise disjoint. Suppose for the sake of a contradiction that $t' \in V_{xa} \cap V_{ab}$. Then the hyperedges $xat',  abt', xab$ are contained in a set of $4$ vertices $\{x,a,b,t'\}$. Thus by Claim \ref{findingoneblock}, this set induces a block of $\partial_2(\h)$ and we are done (we found the desired block!). Thus we can suppose $ V_{xa} \cap V_{ab} = \emptyset$. Similarly $V_{ab} \cap V_{xb} = \emptyset$ and $V_{xa} \cap V_{xb} = \emptyset$. This shows that $\abs{D} = 3 + \abs{V_{xa}} + \abs{V_{xb}} + \abs{V_{ab}}$. On the other hand, $\mathcal D$ consists of $1+\abs{V_{xa}} + \abs{V_{xb}} + \abs{V_{ab}}$ hyperedges, so $\abs{\mathcal D} = \abs{D}-2$.  
	
	We will now show that $D$ induces a block of $\partial_2(\h)$. 
    %To this end, first notice that between any two vertices $u , v \in D$ there is a Berge path $\mathcal Q$ of length 3 consisting of the hyperedges of $\mathcal D$, unless $uv \in \{xa, ab, xb \}$; moreover, if $uv \in \{xa, ab, xb \}$, there is a Berge path $\mathcal Q$ of length 2 between $u$ and $v$ consisting of the hyperedges of $\mathcal D$. 
    %here is the proof of this notice if we need it; decided to not include it as it is straightforward. -Abhi
    %indeed, for any $i, j  \in \{x,a,b\}$, and any two vertices $v_1, v_2 \in V_{ij}$, there are distinct hyperedges containing the pairs $v_1i, ij, jv_2$,  so we have a Berge path of length 3 between $v_1$ and $v_2$. For any two vertices $v_1 \in V_{xa}, v_2 \in V_{ab}$, consider the hyperedges containing the pairs $v_1x, xb, bv_2$. We can find a Berge path of length 3 between $v_1$ and $v_2$ in a similar manner if $v_1 \in V_{xa}, v_2 \in V_{xb}$ or $v_1 \in V_{ab}, v_2 \in V_{xb}$. Finally if $v_1 \in V_{ij}$ and $v_2 \in \{x,a,b\}$, then if $v_2 \not \in \{i, j\}$ consider the pairs $v_1i$, $ij$, $jv_2$; if $v_2 \in \{i, j\}$, let $v \in \{x,a,b\} \setminus \{i,j\}$ and consider the pairs $v_1i$, $iv$, $vv_2$.
Let $D'$ be a (maximal) block of $\partial_2(\h)$ containing $D$ and suppose for the sake of a contradiction that it contains a vertex $p \not \in D$. Then since $D'$ is $2$-connected, there are two paths $P_1, P_2$ in $\partial_2(\h)$ between $p$ and $D$, which are vertex-disjoint besides $p$. Let $V(P_1) \cap D = \{u\}$ and $V(P_2) \cap D = \{v\}$. For each edge $xy \in E(P_1) \cup E(P_2)$, fix an arbitrary hyperedge $h_{xy}$ of $\h$ containing $xy$. It is easy to see that a subset of the hyperedges $\{h_{xy} \mid xy \in E(P_1) \cup E(P_2)\}$ forms a Berge path $\P$ between $u$ and $v$. If $uv \not \in \{xa, ab, xb \}$, then it is easy to see that there is a path $\mathcal P'$ of length 3 between $u$ and $v$ consisting of the hyperedges of $\mathcal D$. Then $\P$ together with $\mathcal P'$ forms a Berge cycle of length at least 4 in $\h$, a contradiction. On the other hand if $uv \in \{xa, ab, xb \}$, then $\P$ must contain at least two hyperedges of $\h$ because otherwise $\P = \{puv\}$ but then $puv$ should have been in $\mathcal D$ (since by definition $\mathcal D$ must contain all the hyperedges of $\h$ containing the pair $uv$); moreover, it is easy to check that between $u$ and $v$ there is a Berge path $\mathcal P'$ of length 2 consisting of the hyperedges of $\mathcal D$. Then again, $\P$ together with $\mathcal P'$ forms a Berge cycle of length at least 4 in $\h$, a contradiction. Therefore, $D'$ contains no vertex outside $D$; so $D$ induces a block of $\partial_2(\h)$ (which contains $\abs{D}-2$ hyperedges of $\h$), as desired.

\vspace{2mm}
 
\textbf{Case 2.} Finally suppose $h = xyt$ for some $t \not \in S$. Let $\mathcal D$ be the set of all hyperedges of $\h$ containing the pair $xy$ plus the hyperedges $xab$ and $yab$, and let $D$ be the set of vertices spanned by the hyperedges of $\mathcal D$. Let $V_{xy} = \{v \mid xyv \in \h \}$. We claim that $a \not \in V_{xy}$ and $b \not \in V_{xy}$. Indeed suppose for the sake of a contradiction that $a \in V_{xy}$. Then the hyperedges $xab, yab, xya$ are contained in a set of 4 vertices $\{x,y,a,b\}$. So by Claim \ref{findingoneblock}, this set induces a block of $\partial_2(\h)$, and we are done. So $a \not \in V_{xy}$. Similarly, we can conclude $b \not \in V_{xy}$. Therefore, $\abs{D} = \abs{V_{xy}}+4$. On the other hand, $\abs{\mathcal D} = \abs{V_{xy}}+2$, so $\abs{\mathcal D} = \abs{D}-2$. 

We claim that $D$ induces a block of $\partial_2(\h)$. The proof is very similar to that of \textbf{Case 1}, we still give it for completeness. Let $D'$ be a (maximal) block of $\partial_2(\h)$ containing $D$ and suppose for the sake of a contradiction that it contains a vertex $p \not \in D$. Then since $D'$ is $2$-connected, there are two paths $P_1, P_2$ in $\partial_2(\h)$ between $p$ and $D$, which are vertex-disjoint besides $p$. Let $V(P_1) \cap D = \{u\}$ and $V(P_2) \cap D = \{v\}$. For each edge $xy \in E(P_1) \cup E(P_2)$, fix an arbitrary hyperedge $h_{xy}$ of $\h$ containing $xy$. It is easy to see that a subset of the hyperedges $\{h_{xy} \mid xy \in E(P_1) \cup E(P_2)\}$ forms a Berge path $\P$ between $u$ and $v$. 

If $uv \not = xy$, then it is easy to see that there is a path $\mathcal P'$ of length 3 or 4 between $u$ and $v$ consisting of the hyperedges of $\mathcal D$. (Indeed if $u,v \in V_{xy}$, then $\mathcal P'$ is of length 4, otherwise it is of length 3.) Then $\P$ together with $\mathcal P'$ forms a Berge cycle of length at least 4 in $\h$, a contradiction. On the other hand if $uv = xy$, then $\P$ must contain at least two hyperedges of $\h$ because otherwise $\P = \{puv\}$ but then $puv$ should have been in $\mathcal D$ (since by definition $\mathcal D$ must contain all the hyperedges of $\h$ containing the pair $uv$); moreover, it is easy to check that between $u$ and $v$ there is a Berge path $\mathcal P'$ of length 2 consisting of the hyperedges of $\mathcal D$. Then again, $\P$ together with $\mathcal P'$ forms a Berge cycle of length at least 4 in $\h$, a contradiction. Therefore, $D'$ contains no vertex outside $D$; so $D$ induces a block of $\partial_2(\h)$ (and contains $\abs{D}-2$ hyperedges of $\h$), as desired. 

% In \textbf{Case 2. } (i.e., in both subcases \textbf{Case 2a.} and \textbf{Case 2b.}) we found a block $D$ of $\partial_2(\h)$ such that $\h[D]$ contains $\abs{D}-2$ hyperedges of $\h$, whereas in \textbf{Case 1. } we found a block $S$ such that $\h[S]$ contains $\abs{S}-1 = 3$ hyperedges of $\h$. 

\vspace{2mm}

Let $D_1, D_2, \ldots, D_p$ be the unique decomposition of $\partial_2(\h)$ into $2$-connected blocks. In \textbf{Case 1} and \textbf{Case 2} we showed that one of these blocks, (say) $D_1 = D$ is such that $\h[D_1]$ contains $\abs{D_1}-2$ hyperedges of $\h$, otherwise, $D_1$ is a set of $4$ vertices such that $\h[D_1]$ contains exactly $\abs{D_1}-1 = 3$ hyperedges of $\h$. In all these cases, note that $e(\h[D_1]) \le \abs{D_1}-1$.

Let us contract the vertices of $D_1$ to a single vertex, to produce a new hypergraph $\h'$. Then it is clear that the block decomposition of $\partial_2(\h')$ consists of the blocks $D_2, \ldots, D_p$. So $\h'$ does not contain any Berge cycle of length $4$ or longer, as well; moreover, $|V(\h')| = |V(\h)|-|D_1|+1$ and $e(\h') = e(\h)-e(\h[D_1])$. By induction, we have  $e(\h') \le |V(\h')|-1$. Therefore, $$e(\h) = e(\h') + e(\h[D_1]) \le |V(\h')|-1 +\abs{D_1}-1 = (|V(\h)|-|D_1|+1)-1+\abs{D_1}-1 = |V(\h)|-1.$$

If $e(\h) =  |V(\h)|-1$, then we must have $e(\h') = |V(\h')|-1$ and $\h[D_1]$ must contain exactly $\abs{D_1}-1$ hyperedges. As noted before, this is only possible if $D_1$ has $4$ vertices and induces exactly $3$ hyperedges of $\h$. Moreover, since equality holds for $\h'$, by induction, $\partial_2(\h')$ is connected and for each block $D_i$ (with $2 \le i \le p$) of $\partial_2(\h')$, $D_i = K_{4}$ and $\h'[D_i]$ contains exactly $3$ hyperedges. This means for every block $D$ of $\partial_2(\h)$, we have $D = K_{4}$ and $\h[D]$ contains exactly $3$ hyperedges of $\h$, completing the proof in the case $r = 3$.

\section*{Acknowledgment}

The research of the authors is partially supported by the National Research, Development and Innovation Office  NKFIH, grant K116769.

\end{document}